\chardef\bslash=`\\ % p. 424, TeXbook
\newcommand{\eval}[2][\right]{\relax
  \ifx#1\right\relax \left.\fi#2#1\rvert}
\theoremstyle{plain}
\newtheorem*{conj*}{Conjecture}
\newtheorem*{cor*}{Corollary}
\newtheorem{theorem}{Theorem}[section]
\newtheorem{thm}[theorem]{Theorem}
\newtheorem{proposition}[theorem]{Proposition}
\newtheorem{corollary}[theorem]{Corollary}
\newtheorem{lema}[theorem]{Lemma}
\newtheorem{definition}{Definition}
\newtheorem{question}{Question}
\newtheorem*{pro}{Problem}
\theoremstyle{definition}
\newtheorem*{def*}{Definition}
\newtheorem{remark}[theorem]{Remark}
\newcommand{\C}{\mathcal C}
\renewcommand{\epsilon}{\varepsilon}
\newcommand{\Z}{\mathbb{Z}}
\newcommand{\N}{\mathbb{N}}
\newcommand{\R}{\mathbb{R}}
\newcommand{\eps}{\varepsilon}
\newcommand{\diam}{\operatorname{diam}}
\begin{document}

\title[Continuum-wise hyperbolicity]{Continuum-wise hyperbolicity}
%Class\thanks{Version 2.0, 1999/11/15}

\author[Artigue]{Alfonso Artigue}
\address{Departamento de Matem\'atica y Estadística del Litoral\\
Universidad de la República\\
Gral. Rivera 1350, Salto, Uruguay}
\email{artigue@unorte.edu.uy}

\author[Carvalho]{Bernardo Carvalho}
\address{Departamento de Matem\'atica\\
Universidade Federal de Minas Gerais - UFMG\\
Av. Ant\^onio Carlos, 6627 - Campus Pampulha\\
Belo Horizonte - MG, Brazil\\
Friedrich-Schiller-Universität Jena\\
Fakultät für Mathematik und Informatik\\
Ernst-Abbe-Platz 2\\
07743 Jena, Germany}
\email{bmcarvalho@mat.ufmg.br}

\author[Cordeiro]{Welington Cordeiro}
\address{Faculty of Mathematics and Computer Science\\Nicolaus Copernicus University\\
Toru\'n - Poland}
\email{wcordeiro@impan.pl}

\author[Vieitez]{Jos\'e Vieitez}
\address{Departamento de Matem\'atica y Estadística del Litoral\\
Universidad de la República\\
Gral. Rivera 1350, Salto, Uruguay}
\email{jvieitez@unorte.edu.uy}

\date{\today}
%\issueinfo{VOL}{NUM}{MONTH}{YEAR}

\begin{abstract}
We introduce \emph{continuum-wise hyperbolicity}, a generalization of hyperbolicity with respect to the continuum theory. We discuss similarities and differences between topological hyperbolicity and continuum-wise hyperbolicity. A shadowing lemma for cw-hyperbolic homeomorphisms is proved in the form of the L-shadowing property and a Spectral Decomposition is obtained in this scenario. In the proof we generalize the construction of Fathi \cite{Fat89} of a hyperbolic metric using only cw-expansivity, obtaining a hyperbolic cw-metric.
We also introduce cwN-hyperbolicity, exhibit examples of these systems for arbitrarily large $N\in\N$ and obtain further dynamical properties of these systems such as finiteness of periodic points with the same period.
We prove that homeomorphisms of $\mathbb{S}^2$ that are induced by topologically hyperbolic homeomorphisms of $\mathbb{T}^2$ are continuum-wise-hyperbolic and topologically conjugate to linear cw-Anosov diffeomorphisms of $\mathbb{S}^2$, being in particular cw2-hyperbolic. 
\end{abstract}
\maketitle
\tableofcontents

\section{Introduction}

In the study of chaotic dynamics a very important class of systems is formed by the \emph{Anosov diffeomorphisms} where each tangent space is divided as a direct sum of two invariant subspaces, the first being contracted and the second expanded in a uniform way by the derivative map. The simplest example of these systems can be constructed as follows: consider a hyperbolic $2\times2$ matrix $A$ with integer coefficients and determinant one and the diffeomorphism $f_A$ of the torus $\mathbb{T}^2$ induced by $A$. The diffeomorphism $f_A$ is usually called a linear Anosov diffeomorphism since it is induced by a linear map. One of the most important features of these systems is given by the Stable Manifold Theorem: local stable and local unstable sets $W^s_c(x)$ and $W^u_c(x)$ are manifolds with the same dimension as, and also tangent to, the stable and unstable spaces, respectively. These sets are elements of a pair of transverse foliations called the \emph{stable} and the \emph{unstable foliations} that together form a \emph{local product structure}: local stable and unstable sets of points $x$, $y$ sufficiently close intersect in a single point $z=W^s_c(x)\cap W^u_c(y)$. 

This was used by Bowen to prove expansiveness and the shadowing property, and from these many important properties of the hyperbolic dynamics can be obtained (see for example the monograph \cite{AH}). For this reason homeomorphisms satisfying expansiveness and shadowing are usually called \emph{topologically hyperbolic}. In previous works we have been discussing possible generalizations of this topological notion of hyperbolicity  and a central notion in these works is \emph{continuum-wise-expansiveness} defined by Kato in \cite{Kato93}.

\begin{definition}
A homeomorphism $f$ of a compact metric space $X$ is \emph{continuum-wise expansive} if there exists $c>0$ such that $W^u_c(x)\cap W^s_c(x)$ is totally disconnected for every $x\in X$.
\end{definition}

In the whole paper \emph{cw} stands for \emph{continuum-wise}. In \cite{Kato93} examples of cw-expansive homeomorphisms were given and their dynamics discussed. For example, it is proved that cw-expansive homeomorphisms defined in compact metric spaces with positive topological dimension have positive topological entropy. One important feature of cw-expansive homeomorphisms is regarding the existence of local stable and unstable continua. Assuming that $X$ is a Peano continuum, that is $X$ is a compact, connected and locally connected metric space, and denoting by $C^s_{c}(x)$ the connected component of $W^s_{c}(x)$ that contains $x$ and by $C^u_{c}(x)$ the connected component of $W^u_{c}(x)$ that contains $x$, it is proved in \cite{Kato93B} the existence of $\delta>0$ such that both $C^s_{c}(x)$ and $C^u_{c}(x)$ have diameter at least $\delta$ for every $x\in X$. We call the set $C^s_{c}(x)$ the \emph{$\eps$-stable continuum} of $x$ and $C^u_{c}(x)$ the \emph{$\eps$-unstable continuum} of $x$. An important property  (proved in Proposition 2.1 of \cite{Kato93}) of local stable and unstable continua is that they are stable and unstable, respectively. This means that $C^s_{c}(x)\subset W^s(x)$ and $C^u_{c}(x)\subset W^u(x)$ for every $x\in X$. This indicates that local stable and unstable continua of cw-expansive homeomorphisms contain some sort of hyperbolicity. We will prove this is indeed the case considering a hyperbolic cw-metric (see Section 2.2).

The same is not true for the local stable and local unstable sets as the next examples shows. The sphere $\mathbb{S}^2$ can be seen as the quotient of $\mathbb{T}^2$ by the antipodal map and then $f_A$ induces a diffeomorphism $g_A\colon \mathbb{S}^2\to\mathbb{S}^2$. We call the map $g_A$ constructed this way a \emph{linear cw-Anosov diffeomorphism}. It is transitive, cw-expansive and satisfies the shadowing property (as proved in \cite{AAV}).
In this example, there is a pair of stable and unstable singular foliations with four singularities as in the next figure.

\begin{figure}[ht]
\begin{center}
\includegraphics[scale=0.6]{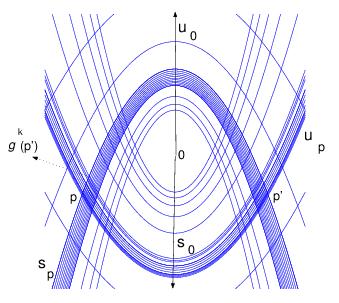}
\caption{non locally connected local stable/unstable sets}\label{fig1}
\end{center}
\end{figure}

The sets $C^s_c(x)$ and $C^u_c(x)$ are elements of this pair of singular foliations but the sets $W^s_c(x)$ and $W^u_c(x)$ are not modeled by it. Indeed, the local stable and unstable sets of most points in $S^2$ are not locally connected. To see this, choose $c>0$ and $\sigma>0$ sufficiently small such that $C^s_c(x)$ of a point $x$ that is $\sigma$-close to the singularity intersects $C^u_c(x)$ in another point $x'$ (see figure \ref{fig1}). If the point $x$ is periodic, with period $\pi(x)$, then the $g_A^{\pi(x)}$-orbit of $x'$ is contained in $W^s_c(x)$ and, hence, all the sets $C^s_c(g_A^{n\pi(x)}(x'))$, with $n\in\Z$, are contained in $W^s_{2c}(x)$. If instead of a periodic point we choose $x$ a transitive point the situation is ``worse'' in the sense that there is a Cantor set of distinct arcs contained in $W^s_c(x)$ (see Proposition 2.2.2 in \cite{ArtigueDend} for details). The same occurs with respect to $W^u_c(x)$ assuming $x$ is transitive for $g_A^{-1}$. Thus, the local stable and unstable sets are far from being a small arc, i.e.,
the sets $W^{s}_{c}(x)$ and $C^{s}_{c}(x)$, and also $W^{u}_{c}(x)$ and $C^{u}_{c}(x)$, are rather different.
The following is the main problem we have been considering.

\begin{pro}
Classify all cw-expansive homeomorphisms satisfying the shadowing property.
\end{pro}

There are intermediate phenomena that are between expansiveness and cw-expansiveness. Among the previous known notions are: countably, finite and n-expansiveness. Examples of homeomorphisms in each of these classes and satisfying the shadowing property can be found in \cite{ACCV2} and \cite{CC}. These examples all have in common the fact that the space have an infinite number of isolated points and also an infinite number of distinct chain-recurrent classes. In \cite{APV} a 2-expansive homeomorphism that is not expansive and is defined on the bitorus was introduced (this example does not satisfy the shadowing property), though it is proved there that 2-expansive homeomorphisms defined on surfaces and containing no wandering points are expansive. It is proved in \cite{ACCV2} that if the space is a closed surface and under a transitivity assumption, then countably expansive homeomorphisms with the shadowing property are topologically conjugate to a linear Anosov diffeomorphism. These results seem to indicate that when the space is regular enough, then there are now new examples of countably expansive homeomorphisms satisfying the shadowing property. Besides that, the linear cw-Anosov diffeomorphism $g_A$ is not countably expansive since it admits hyperbolic horseshoes in arbitrarily small dynamical balls. Thus, it is natural to consider generalizations of topological hyperbolicity that include this example and are different from countable expansivity. Exactly this is done with the introduction of \emph{continuum-wise hyperbolicity}.

The difference between local stable/unstable sets and local stable/unstable continua and also the ``hyperbolic'' properties of local stable/unstable continua lead us to explore a version of the local product structure that considers intersections between the local stable continuum of $x$ and the local unstable continuum of $y$ provided $x$ and $y$ are sufficiently close. This property will be called \emph{cw-local-product-structure} and generalizes the local product structure of expansive homeomorphisms to the cw-expansive scenario.

\begin{definition} \label{cwhyp}
A homeomorphism $f$ of a compact metric space $X$ satisfies the \emph{cw-local-product-structure} if for each $\eps>0$ there exists $\delta>0$ such that $$C^s_\epsilon(x)\cap C^u_\epsilon(y)\neq\emptyset \,\,\,\,\,\, \text{whenever} \,\,\,\,\,\, d(x,y)<\delta.$$ The cw-expansive homeomorphisms satisfying the cw-local-product-structure are called \emph{continuum-wise hyperbolic}
\end{definition}

Spaces supporting cw-hyperbolic homeomorphisms must be locally connected and, hence, a Peano continuum. We will usually suppress this assumption in the results, but if we say that $f$ is a cw-hyperbolic homeomorphism of $X$ it is implicit that $X$ is a Peano continuum. In this paper we study the dynamics of continuum-wise-hyperbolic homeomorphisms and exhibit examples of these systems.

\section{Topological and cw-hyperbolicity}

In this section we explore the similarities and the differences between topological hyperbolicity and cw-hyperbolicity.

\subsection{Cw-hyperbolic shadowing lemma} In hyperbolic systems there is a shadowing lemma that helps the undesrtanding of many properties of the hyperbolic dynamics.

\vspace{+0.4cm}

\hspace{-0.4cm}\emph{Hyperbolic shadowing lemma}: If $f$ is a Anosov diffeomorphism, then it satisfies the \emph{unique shadowing property}, that is,  for every $\eps>0$, there exists $\delta>0$ such that for every sequence $(x_k)_{k\in\Z}\subset X$ satisfying $$d(f(x_k),x_{k+1})\leq\delta\,\,\,\,\,\, \text{for every} \,\,\,\,\,\, k\in\Z$$ there is a unique $z\in X$ satisfying $$d(f^k(z),x_k)\leq\eps \,\,\,\,\,\, \text{for every} \,\,\,\,\,\, k\in\Z.$$ The sequence $(x_k)_{k\in\Z}$ is called a $\delta$-pseudo orbit of $f$ that is $\eps$-shadowed by $z$.

\vspace{+0.6cm}

The unique shadowing property has strong consequences on the dynamics, to name a few: local stable/unstable sets are contained in a single stable/unstable set, there is a decomposition of the non-wandering set into a finite number of chain recurrent classes containing a dense set of periodic points (see \cite{AH}), the systems satisfy other notions of pseudo-orbit tracing properties such as the limit shadowing property (see \cite{C} and \cite{CC2}). The uniqueness of the shadowing point is equivalent to expansiveness, so in the cw-hyperbolic scenario it is not possible to obtain this uniqueness. Indeed, in the non-hyperbolic examples, there are Cantor sets of distinct points shadowing the same $\delta$-pseudo orbit. But we can still prove the shadowing property and, in fact, a stronger version of it.

\vspace{+0.2cm}

\begin{definition}
A homeomorphism $f$ satisfies the L-shadowing property if for every $\eps>0$, there exists $\delta>0$ such that for every sequence $(x_k)_{k\in\Z}\subset X$ satisfying $$d(f(x_k),x_{k+1})\leq\delta\,\,\,\,\,\, \text{for every} \,\,\,\,\,\, k\in\Z \,\,\,\,\,\, \text{and}$$  $$d(f(x_k),x_{k+1})\to0 \,\,\,\,\,\, \text{when} \,\,\,\,\,\, |k|\to\infty,$$ there is $z\in X$ satisfying $$d(f^k(z),x_k)\leq\eps \,\,\,\,\,\, \text{for every} \,\,\,\,\,\, k\in\Z \,\,\,\,\,\, \text{and}$$ $$d(f^k(z),x_k)\to0\,\,\,\,\,\, \text{when} \,\,\,\,\,\, |k|\to\infty.$$ 
The sequence $(x_k)_{k\in\Z}$ is called a $\delta$-\emph{limit-pseudo-orbit} of $f$ that is $\eps$-\emph{limit-shadowed} by $z$.
\end{definition}

This property was defined in \cite{CC2} and further explored in \cite{ACCV}. Then we are ready to state our first main result:

\begin{theorem}
\label{teoChHypLsh}
Every continuum-wise-hyperbolic homeomorphism satisfies the L-shadowing property.
\end{theorem}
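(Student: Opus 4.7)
The plan is to combine the cw-local-product-structure with exponential contraction/expansion along stable and unstable continua, the latter supplied by the hyperbolic cw-metric announced in Section 2.2 (generalizing Fathi's construction from \cite{Fat89}). Throughout I would work in a metric $\rho$ equivalent to $d$ for which there exist $\la\in(0,1)$ and $\eps_0>0$ such that if $y\in C^s_{\eps_0}(x)$ then $\rho(f(x),f(y))\le\la\,\rho(x,y)$, and if $y\in C^u_{\eps_0}(x)$ then $\rho(f^{-1}(x),f^{-1}(y))\le\la\,\rho(x,y)$. Fix $\eps\in(0,\eps_0)$ small, and then choose $\delta>0$ given by the cw-local-product-structure so that $C^s_\eps(a)\cap C^u_\eps(b)\neq\emptyset$ whenever $\rho(a,b)<\delta$, with $\delta$ also small enough that the forthcoming construction stays inside the domain where the contraction/expansion estimates hold.

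Next, given a $\delta$-limit-pseudo-orbit $(x_k)_{k\in\Z}$, I would first construct shadowing points for each finite window $[-N,N]$ by induction. Set $y^N_{-N}:=x_{-N}$; assuming $y^N_k$ has been chosen with $\rho(y^N_k,x_k)$ small, then $\rho(f(y^N_k),x_{k+1})\le\rho(f(y^N_k),f(x_k))+\rho(f(x_k),x_{k+1})$, which by the contraction along $C^s_\eps$ and the pseudo-orbit hypothesis is $<\delta$. Define $y^N_{k+1}$ to be any point of $C^s_\eps(f(y^N_k))\cap C^u_\eps(x_{k+1})$ supplied by the cw-local-product-structure. The telescoping estimate
\[
\rho(y^N_k,x_k)\le\sum_{j\le k}\la^{k-j}\,\rho(f(x_{j-1}),x_j)
\]
(using that the corrective jumps lie in unstable continua of $x_j$, which are iterated forward along stable continua thereafter) keeps the orbit of $y^N_{-N}$ within a controlled $\eps$-window of $(x_k)$ for all $-N\le k\le N$.

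Taking $z^N:=y^N_0$ and passing to a subsequential limit $z\in X$ as $N\to\infty$ by compactness, one obtains a point whose orbit $\eps$-shadows $(x_k)$ for every $k\in\Z$. For the L-part, one splits the sum above at a large $|j_0|$: by hypothesis the tail $\sum_{|j|\ge j_0}\la^{k-j}\rho(f(x_{j-1}),x_j)$ is small because the jumps vanish at infinity, while the head is dominated by $\la^{k-j_0}$ times a bounded quantity and hence tends to $0$ as $k\to+\infty$. A symmetric argument using $C^u_\eps$-contraction under $f^{-1}$ handles $k\to-\infty$. This gives $\rho(f^k(z),x_k)\to0$, which is equivalent to convergence in $d$.

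The main obstacle is twofold. First, the hyperbolic cw-metric itself is the cw-analogue of Fathi's metric, and verifying that Fathi's construction goes through using only cw-expansiveness (without the single-valued local product of expansive systems) is the technical heart; this is precisely what Section 2.2 must deliver. Second, and more subtle in the construction above, the intersection $C^s_\eps(a)\cap C^u_\eps(b)$ is in general not a single point, so the inductive choice of $y^N_{k+1}$ is not canonical. One must argue that any choice stays within a uniformly contracted continuum, so that the accumulated error remains of order $\eps$ and the limit $z$ exists and shadows; here the hyperbolic cw-metric, by contracting the whole continuum $C^s_\eps(f(y^N_k))$ forward, makes the construction robust against the multivaluedness inherent to the cw-setting.
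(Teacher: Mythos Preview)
Your overall strategy---Bowen-style inductive correction using the cw-local-product-structure together with hyperbolic contraction---matches the paper's route to the shadowing property. However, there is a genuine gap in how you invoke the hyperbolic cw-metric. You assume an actual metric $\rho$ on $X$, equivalent to $d$, with pointwise contraction $\rho(f(x),f(y))\le\lambda\,\rho(x,y)$ whenever $y\in C^s_{\eps_0}(x)$. The paper explicitly says this is \emph{not} what can be obtained in the cw-expansive setting: the object $D$ of Section~2.2 is a function on $E=\{(p,q,C):C\in\C(X),\,p,q\in C\}$, not on $X\times X$, and its triangle inequality reads $D([A\cup B]_{(a,c)})\le D(A_{(a,b)})+D(B_{(b,c)})$ only for $b\in A\cap B$. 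Consequently your telescoping bound $\rho(y^N_k,x_k)\le\sum_{j\le k}\lambda^{k-j}\rho(f(x_{j-1}),x_j)$ has no direct analogue. The paper instead assembles the iterated stable continua into a single continuum $\mathcal{C}=\bigcup_{j} f^{i-jn}(C^s_\eps(x'_{jn}))$, checks the overlap conditions so that $D$'s triangle inequality applies, bounds $D(\mathcal{C}_{(\cdot,\cdot)})$ by a geometric series, and finally uses the compatibility between $D$ and $\diam$ to return to $d$. Your sketch also conflates the corrected sequence $(y^N_k)$ with the orbit of $y^N_0$; these are different, and controlling $d(f^i(y^N_0),x_i)$ is exactly what forces the continuum-level argument.

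A second difference: the paper does not prove the L-shadowing property directly. It first establishes ordinary shadowing via the argument above (Theorem~\ref{teoCwHypSh}), and then invokes Theorem~E of \cite{ACCV}, which upgrades shadowing plus the cw-local-product-structure to L-shadowing. Your direct tail-splitting argument for the limit part is a different route; it might be salvageable once the cw-metric is handled at the level of continua, but as written it rests on the same pointwise-metric assumption and is not what the paper does.
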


Using the L-shadowing property as it is done in \cite{ACCV} we can obtain information about the dynamics of every cw-hyperbolic homeomorphism proving a Spectral Decomposition in this scenario. Definitions of all notions contained in the following corollary can be found in \cite{ACCV}.

\begin{corollary}\label{classes}
If $f$ is a cw-hyperbolic homeomorphism, then
\begin{enumerate}
	\item $f$ admits only a finite number of distinct chain recurrent classes,
	\item each class is either expansive or admits arbitrarily small totally disconnected topological semihorseshoes,
	\item each chain recurrent class $C$ is transitive and admits a periodic decomposition into elementary sets $C_1,\dots,C_n$,
	\item $f^n$ restricted to each elementary set is topologically mixing and satisfies both the specification and the two-sided limit shadowing properties.
\end{enumerate}
\end{corollary}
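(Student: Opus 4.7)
My plan is to deduce the corollary from Theorem \ref{teoChHypLsh} by applying the L-shadowing machinery developed in \cite{ACCV}, using cw-expansivity only when the dichotomy in item (2) requires producing semihorseshoes. Since every cw-hyperbolic homeomorphism is L-shadowing, the general structural theorems proved in \cite{ACCV} for L-shadowing homeomorphisms apply verbatim, and most of the work reduces to citing those results.

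First I would handle items (1), (3) and (4). It is proved in \cite{ACCV} that any homeomorphism of a compact metric space satisfying the L-shadowing property has only finitely many chain recurrent classes, each of which is transitive and admits a periodic decomposition $C=C_1\cup\dots\cup C_n$ such that $f$ cyclically permutes the $C_i$'s and $f^n|_{C_i}$ is topologically mixing and satisfies both the periodic specification property and the two-sided limit shadowing property. Applying this to our $f$ immediately yields (1), (3), and (4). No cw-expansivity is needed beyond what is already encoded in the L-shadowing property.

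The genuinely new input is item (2). Fix a chain recurrent class $C$. By item (4), $f^n|_{C_i}$ is topologically mixing and has L-shadowing; hence $f|_C$ itself satisfies L-shadowing. If $f|_C$ is expansive we are done, so assume it is not. Then for arbitrarily small $\eta>0$ one finds points $x,y\in C$ at distance less than $\eta$ whose bi-infinite orbits stay at distance $\le c$, where $c$ is the cw-expansivity constant. Using L-shadowing and the mixing of $f^n|_{C_i}$, I would interleave long finite $x$- and $y$-segments to produce $\delta$-limit-pseudo-orbits that are limit-shadowed by orbits realizing every prescribed symbolic pattern in $\{0,1\}^{\mathbb{Z}}$; this is exactly the construction of a topological semihorseshoe carried out in \cite{ACCV}, and by construction it is contained in a dynamical ball of diameter arbitrarily small. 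Finally, cw-expansivity forces the set $W^u_c(x)\cap W^s_c(x)$ containing the shadowing points of the horseshoe to be totally disconnected, yielding the totally disconnected semihorseshoes required in (2).

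The main obstacle I anticipate is the construction of the semihorseshoe in the non-expansive case: one has to verify that the pair of nearby orbits with bounded mutual distance actually produces distinct shadowing points (so the horseshoe is nondegenerate), and to keep the horseshoe inside an arbitrarily small dynamical ball one must carefully interleave the pseudo-orbits so that the error goes to zero at $\pm\infty$, invoking L-shadowing rather than plain shadowing. Once this is arranged, total disconnectedness is automatic from cw-expansivity, and the rest of the corollary is a direct transcription of the L-shadowing results of \cite{ACCV}.
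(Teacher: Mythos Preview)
Your proposal is correct and follows the same approach as the paper: the corollary is stated there as an immediate consequence of Theorem \ref{teoChHypLsh} together with the structural results for L-shadowing homeomorphisms proved in \cite{ACCV}, with no further argument supplied. Your sketch actually provides more detail for item (2) than the paper does, but the strategy is identical.
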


The proof of the shadowing property for cw-hyperbolic homeomorphisms comes from the following ideas: a folklore result is that expansivity and the usual local product structure ($d(x,y)<\delta$ implies $W^u_{\eps}(x)\cap W^s_{\eps}(y)\neq\emptyset$) can be used to prove the shadowing property. The first step is to construct a \emph{hyperbolic metric} as Fathi does in \cite{Fat89} to obtain exponential rates of contractions on both $W^u_{\eps}(x)$ and $W^s_{\eps}(y)$ and the second step is to note that, with a hyperbolic metric in hands, Bowen's proof of the shadowing property in the hyperbolic scenario can be repeated using the local product structure. Our proof generalizes this idea to the cw-hyperbolic scenario: cw-expansivity together with the cw-local-product-structure can be used to prove the shadowing property. Of course we could never obtain a hyperbolic metric for cw-expansive homeomorphisms since local stable sets could not be contained on a single stable set. However, we obtain exponential rates of contraction on the local stable and local unstable continua $C^s_{\eps}(x)$ and $C^u_{\eps}(x)$ introducing a \emph{hyperbolic cw-metric}. This is not a metric on the space $X$, it is a function on the space of subcontinua with two marked points that allow us to use the cw-local-product-structure, adapt the proof of Bowen and prove the shadowing property. After the shadowing property is proved, the L-shadowing property follows from the cw-local-product-structure and Theorem E in \cite{ACCV}.

\vspace{+0.4cm}

\subsection{Hyperbolic cw-metric}

The idea of an adapted metric such that an expansive homeomorphism behaves hyperbolically with respect to that metric appears in dynamics in an article of Reddy \cite{Reddy} and was also considered by Fathi in \cite{Fat89}, although the techniques used on both articles are not the same. Fathi's idea is to build a metric taking into account the minimal number of iterates (forward or backward) that are necessary to obtain separation of a pair of points in more than an expansivity constant. If two different points $x,y$ are sufficiently close, then the number of iterates $N(x,y)$ should be large. A first idea to built such a metric would be taking $\lambda>1$ and define
$D(x,y)=\lambda^{-N(x,y)}$. In the case $x=y$ we directly put $D=0$. Of course, the problem with this naive approach is that the triangular inequality fails. Then $\lambda$ has to be chosen judiciously in order to apply Frink's metrization lemma \cite{Fri37}.
Now we adapt the idea of Fathi to the cw-expansive setting. 
Denote by $\C(X)$ the space of all subcontinua of $X$.
Define
$$E=\{(p,q,C): C\in \C(X),\, p,q\in C\}.$$
For $p,q\in C$ denote $C_{(p,q)}=(p,q,C)$.
The notation $C_{(p,q)}$ implies that $p,q\in C$ and that $C\in\C(X)$.
For a homeomorphism $f\colon X\to X$ and $C_{(p,q)}\in E$ define
$$f(C_{(p,q)})=f(C)_{(f(p),f(q))}.$$
Consider the sets
\[
\C^s_\eps(X)=\{C\in\C(X):\diam(f^n(C))\leq\eps\, \text{ for every }\, n\geq 0\},
\]
\[
\C^u_\eps(X)=\{C\in\C(X):\diam(f^{-n}(C))\leq\eps\, \text{ for every }\, n\geq 0\}.
\]
Now we are ready to state and prove our hyperbolic cw-metric.

\begin{theorem}
\label{teoCwHyp}
If $f\colon X\to X$ is a cw-expansive homeomorphism of a compact metric space $X$, then there is a function $D\colon E\to\R$ satisfying the following conditions.
\begin{enumerate}
\item Metric properties:
\vspace{+0.2cm}
\begin{enumerate}
 \item $D(C_{(p,q)})\geq 0$ with equality if, and only if, $C$ is a singleton,\vspace{+0.1cm}
 \item $D(C_{(p,q)})=D(C_{(q,p)})$,\vspace{+0.1cm}
 \item $D([A\cup B]_{(a,c)})\leq D(A_{(a,b)})+D(B_{(b,c)})$, $a\in A, b\in A\cap B, c\in B$.
\end{enumerate}
\vspace{+0.2cm}
\item Hyperbolicity: there exist constants $\lambda\in(0,1)$ and $\varepsilon>0$ satisfying
\vspace{+0.2cm}
	\begin{enumerate}
	\item if $C\in\C^s_\eps(X)$ then $D(f^n(C_{(p,q)}))\leq 4\lambda^nD(C_{(p,q)})$ for every $n\geq 0$,\vspace{+0.1cm}
	\item if $C\in\C^u_\eps(X)$ then $D(f^{-n}(C_{(p,q)}))\leq 4\lambda^nD(C_{(p,q)})$ for every $n\geq 0$.
	\end{enumerate}
\vspace{+0.2cm}
\item Compatibility: for each $\delta>0$ there is $\gamma>0$ such that
\vspace{+0.2cm}
\begin{enumerate}
\item if $\diam(C)<\gamma$, then $D(C_{(p,q)})<\delta$\,\, for every $p,q\in C$,\vspace{+0.1cm}
\item if there exist $p,q\in C$ such that $D(C_{(p,q)})<\gamma$, then $\diam(C)<\delta$.
\end{enumerate}
\end{enumerate}
\end{theorem}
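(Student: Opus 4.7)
My plan is to mimic Fathi's construction \cite{Fat89} on the space $E$ of marked subcontinua, replacing pairs of points by continua throughout. Fix a cw-expansivity constant $c>0$ and, for a non-singleton $C\in\C(X)$, define
$$N(C)=\min\{|n|:n\in\Z,\ \diam(f^n C)>c\},$$
setting $N(C)=+\infty$ when $C$ is a singleton; cw-expansivity guarantees that $N(C)$ is finite on every nondegenerate continuum. Fix $\lambda\in(0,1)$ to be calibrated later and set
$$D'(C_{(p,q)})=\lambda^{N(C)}.$$
This quantity is symmetric under swapping the marked points and vanishes exactly on singletons, so properties (1)(a) and (1)(b) are immediate. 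The function $D$ in the theorem will then be extracted from $D'$ via Frink's metrization lemma \cite{Fri37}.

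To apply Frink's lemma I need an approximate subadditivity estimate for $D'$. The key geometric input is that if $b\in A\cap B$, then $A\cup B$ is a continuum and $\diam(f^n(A\cup B))\leq\diam(f^n A)+\diam(f^n B)$. Thus, if at time $n=N(A\cup B)$ the union exceeds diameter $c$, at least one of $f^n A$, $f^n B$ exceeds diameter $c/2$; and by uniform continuity of $f^{\pm 1}$ on the compact space $X$ the discrepancy between the ``$c$-time'' and the ``$c/2$-time'' of a single continuum is bounded by some absolute constant $K_0$. Combined, this yields a Frink-type quasi-triangle inequality, and for $\lambda$ chosen sufficiently close to $1$ (so that $\lambda^{-K_0}$ stays below the threshold Frink's lemma requires) one obtains a function $D\colon E\to\R$ satisfying $D\leq D'\leq 4D$ together with the additivity required in (1)(c); the factor $4$ is exactly what reappears in the hyperbolicity constant of the statement.

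The hyperbolic estimates in (2) then reduce to the analogous statement for $D'$. If $C\in\C^s_\eps(X)$ with $\eps\leq c$, every forward iterate $f^k(C)$ has diameter at most $c$, so the minimum defining $N(C)$ is attained at a negative index; shifting $C$ forward by $m\geq 0$ shifts this backward witness by $-m$, giving $N(f^m C)=N(C)+m$, hence $D'(f^m C_{(p,q)})=\lambda^m D'(C_{(p,q)})$ and $D(f^m C_{(p,q)})\leq 4\lambda^m D(C_{(p,q)})$; the $\C^u_\eps$ case follows symmetrically under $f\leftrightarrow f^{-1}$. Compatibility in (3) comes from two uniform estimates: uniform continuity of the iterates $f^{\pm k}$ for $|k|\leq M$ gives $\gamma>0$ such that $\diam(C)<\gamma$ forces $\diam(f^n C)\leq c$ for all $|n|\leq M$, so $D'(C_{(p,q)})\leq\lambda^M$ is arbitrarily small; conversely the uniform cw-expansivity principle, a compactness argument on the hyperspace $\C(X)$, provides $M(\eta)$ such that $\diam(C)\geq\eta$ already forces $N(C)\leq M(\eta)$, so that small $D$ forces small diameter. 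The main technical obstacle is the calibration of $c$ and $\lambda$ so that the Frink constant is compatible with the hyperbolicity exponent, and, in particular, that the loss $K_0$ incurred when passing from threshold $c$ to threshold $c/2$ does not destroy the contraction factor; this is precisely the balancing act performed by Fathi, and the continuum triangle inequality $\diam(f^n(A\cup B))\leq\diam(f^n A)+\diam(f^n B)$ is what allows it to transfer verbatim to the cw-expansive setting.
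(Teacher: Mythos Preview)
Your approach is essentially the paper's: define $\rho(C)=\lambda^{N(C)}$ (your $D'$), pass to the chain-infimum $D$ to obtain the triangle inequality, and use the two-sided estimate $D\le\rho\le 4D$ to transfer hyperbolicity and compatibility from $\rho$ to $D$. The hyperbolicity and compatibility arguments you sketch match the paper's line for line.

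There is, however, one misattribution that would make the written argument fail as stated. The bound you call $K_0$---that once a continuum has diameter exceeding $c/2$ it must exceed $c$ within a uniformly bounded number of iterates---does \emph{not} follow from uniform continuity of $f^{\pm1}$; uniform continuity only says that small sets stay small under one iterate and gives no control on growth from $c/2$ to $c$. The correct source is precisely the ``uniform cw-expansivity principle'' you yourself invoke later for compatibility (3)(b): the hyperspace-compactness argument (Corollary~2.4 of \cite{Kato93}, as used in the paper) yields $m$ such that $\diam(B)>c/2$ forces $N(B)\le m$. The paper then takes $\lambda=2^{-1/m}$, so that $\rho(A\cup B)\le 2\max\{\rho(A),\rho(B)\}$ and the Frink induction produces the factor $4$. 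Once you swap ``uniform continuity'' for this cw-expansivity lemma in your Frink step, your proof is complete and coincides with the paper's.
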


\begin{proof}
In what follows we assume that $f$ is a cw-expansive homeomorphism with $\eps>0$ a cw-expansivity constant. For each non-trivial $C\in \C(X)$ we define
$$N(C)=\min\left\{|n|;\, \diam(f^nC)>\eps, \,\, n\in\Z\right\}.$$
By Corollary 2.4 in \cite{Kato93} there exists $m>0$ such that $$\diam(C)>\frac{\eps}{2} \,\,\,\,\,\, \text{implies} \,\,\,\,\,\, N(C)\leq m.$$
Let $\lambda=2^{-1/m}$ and define $\rho:\C(X)\to\R^+$ by 
$$\rho(C)=
\begin{cases}
\lambda^{N(C)} & \text{if $C$ is a non-trivial continuum}\\
0 & \text{if}\,\, C=\{p\}\,\, \text{and}\,\, p\in X.
\end{cases}$$
Define the map $D\colon E\to\R$ as follows: for each $C_{(p,q)}\in E$ let
\begin{equation}\label{Frinkmetric}
D(C_{(p,q)})=\inf \sum_{i=1}^n\rho(A^i_{(a_{i-1},a_i)})
\end{equation}
where the infimum is taken over all $n\geq 1$, $a_0=p,a_1,\dots,a_n=q\in X$ and $A^1,\dots,A^n\in \C(X)$
such that $C=\cup_{i=1}^n A^i$.
The proof of this theorem is based on the following inequalities
\begin{equation}
\label{ecu4D}
D(C_{(p,q)})\leq\rho(C)\leq 4D(C_{(p,q)})
\end{equation}
for every $C_{(p,q)}\in E$. The first inequality is trivial and the second one
will be proved in Lemma \ref{lemFrink} below.
We complete the proof assuming the inequalities in \eqref{ecu4D}.
\vspace{+0.1cm}

\emph{Metric properties}.
If $D(C_{(p,q)})=0$, then using \eqref{ecu4D} we obtain $\rho(C)=0$ and since $f$ is cw-expansive, this implies that $C$ is a singleton. This proves (a) and conditions (b) and (c) are direct from the definitions.
\vspace{+0.2cm}

\emph{Hyperbolicity}.
If $C$ is trivial then it obviously satisfies the conclusions, so we assume that $C\in\C_\eps^s(X)$ is non-trivial. Then 
$$\diam(f^n(C))\leq\eps \,\,\,\,\,\, \text{for every} \,\,\,\,\,\, n\geq 0$$
and since $\eps$ is a cw-expansivity constant of $f$ we obtain
$$\diam(f^{-N(C)}(C))>\eps.$$
Besides that, $N(f^n(C))=n+N(C)$ for every $n\geq 0$ and this assures that
\[
\begin{array}{ll}
 D(f^n(C_{(p,q)}))& \leq \rho(f^n(C))=\lambda^{N(f^n(C))}=\lambda^{n+N(C)}\\
 &=\lambda^n\lambda^{N(C)}=\lambda^n\rho(C)\\
 &\leq 4\lambda^n D(C_{(p,q)})
\end{array}
\]
for every $n\geq 0$ and every $p,q\in C$.
The hyperbolicity for $C\in\C^u_\eps(X)$ is proved in a similar way.
\vspace{+0.2cm}

\emph{Compatibility}. To prove compatibility between $\diam$ and $D$ it is sufficient to prove the compatibility betweem $\diam$ and $\rho$ in view of the inequalities \eqref{ecu4D}. For each $\delta>0$ choose $n\in\N$ such that $\lambda^n<\delta$ and choose $\gamma>0$, given by uniform continuity of $f$, such that if $C\in\C(X)$ satisfies $\diam(C)<\gamma$, then $$\diam(f^i(C))<\eps \,\,\,\,\,\, \text{whenever} \,\,\,\,\,\, |i|\leq n.$$ Thus, $N(C)> n$, $\rho(C)=\lambda^{N(C)}<\lambda^n<\delta$ and (a) is proved. For each $\delta>0$ we can choose $n\in\N$, given by Corollary 2.4 in \cite{Kato93}, such that if
$$\diam(f^i(C))<\eps \,\,\,\,\,\, \text{whenever} \,\,\,\,\,\,  |i|<n,$$ then $\diam(C)<\delta$.
If $\gamma=\lambda^n$ and $\rho(C)<\gamma$ then $\lambda^{N(C)}<\lambda^n$ and, hence, $N(C)>n$ which in turn implies $\diam(C)<\delta$ as observed above. This finishes the proof.
\end{proof}

The following result is based in \cite{Fri37} and will imply that $$\rho(C)\leq 4D(C_{(p,q)}) \,\,\,\,\,\, \text{for every} \,\,\,\,\,\, C_{(p,q)}\in E.$$

\begin{lema}
\label{lemFrink}
The function $\rho$ of the previous proof satisfies:
\begin{equation}
\label{ecuF4}
\rho(\cup_{i=1}^nC_i)\leq
  2\rho(C_1) + 4\rho(C_2)+ \dots+4\rho(C_{n-1})+2\rho(C_n)
\end{equation}
for every $C_1,\dots,C_n\in \C(M)$ such that
$C_i\cap C_{i+1}\neq\emptyset$ for every $i\in\{1,\dots,n-1\}$.
\end{lema}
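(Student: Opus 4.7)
The engine of the induction is the two-continuum estimate: whenever $C, C' \in \C(X)$ with $C \cap C' \neq \emptyset$, one has
\[ \rho(C \cup C') \leq 2\max(\rho(C), \rho(C')). \]
To prove it, set $k = N(C \cup C')$ and pass to $f^k$; the continuum $f^k(C \cup C') = f^k(C) \cup f^k(C')$ has diameter exceeding $\eps$ and is a union of two overlapping subcontinua, so at least one of $\diam(f^k(C))$, $\diam(f^k(C'))$ exceeds $\eps/2$. The defining property of $m$ (Corollary 2.4 of \cite{Kato93}) then gives $\min(N(C), N(C')) \leq k + m$, whence $N(C \cup C') \geq \min(N(C), N(C')) - m$, and the estimate follows from $\lambda^m = 1/2$.

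With this in place, the chain inequality will be proved by strong induction on $n$, in the spirit of Frink's metrization lemma. Write $T_n$ for the right-hand side, and let $P_k, R_k$ denote the corresponding weighted expressions for the prefix $C_1,\dots,C_k$ and the suffix $C_{k+1},\dots,C_n$. A short combinatorial check, comparing the weight patterns $(2,4,\dots,4,2)$, yields for interior indices $2 \leq k \leq n-2$ the identities $P_k + R_k = T_n - 2(\rho(C_k) + \rho(C_{k+1}))$ and $P_{k+1} - P_k = 2(\rho(C_k) + \rho(C_{k+1}))$. Choosing $k$ to be the largest index in $\{1,\dots,n-1\}$ with $P_k \leq T_n/2$, these two identities force $R_k < T_n/2$ as well; applying the induction hypothesis to the halves $A = \cup_{i \leq k} C_i$ and $B = \cup_{i > k} C_i$ (both continua, intersecting along $C_k \cap C_{k+1}$) and gluing them by the two-continuum estimate gives $\rho(\cup C_i) \leq 2\max(\rho(A), \rho(B)) \leq T_n$.

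Two boundary situations escape this balanced-split scheme: the case $\rho(C_1) > T_n/4$, where no admissible $k$ satisfies $P_k \leq T_n/2$, and the dual case $\rho(C_n) > T_n/4$, where the forced split $k = n-1$ produces $R_{n-1} = 2\rho(C_n) > T_n/2$. These cases are mutually exclusive — else $2\rho(C_1) + 2\rho(C_n) > T_n$ would contradict the very form of $T_n$ — and each is handled by absorbing the dominant endpoint. In the first case, the induction hypothesis gives $\rho(\cup_{i \geq 2} C_i) \leq T_{n-1}(C_2,\dots,C_n) = T_n - 2\rho(C_1) - 2\rho(C_2) < T_n/2$ (the strict inequality because $\rho(C_1) > T_n/4$), and the two-continuum estimate applied to $C_1$ and $\cup_{i \geq 2} C_i$ then closes the argument; the dual case is symmetric.

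The main obstacle I foresee is this asymmetric bookkeeping: the sharper factor $2$ at the endpoints (rather than $4$) is precisely what yields the bound $\rho \leq 4D$ used in the proof of Theorem \ref{teoCwHyp}, yet it forces the proof to treat the first and last continua separately from the interior ones. The decisive technical point is to use the two-continuum estimate in its $2\max$ form throughout, rather than the looser additive $2\rho + 2\rho$, so that the multiplicative constants do not accumulate along the recursion.
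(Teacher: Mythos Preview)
Your argument is correct but follows a different route from the paper's proof. Both begin with the same two-continuum estimate $\rho(A\cup B)\leq 2\max\{\rho(A),\rho(B)\}$, and both proceed by strong induction on $n$. The divergence is in how the chain is split. The paper chooses the split point by watching the values $\rho(C_1\cup\dots\cup C_r)$ themselves: it takes the least $r$ with $2\rho(C_1\cup\dots\cup C_r)\geq\rho(C)$, so that also $2\rho(C_r\cup\dots\cup C_n)\geq\rho(C)$ by the two-continuum bound, and hence
\[
\rho(C)\leq \rho(C_1\cup\dots\cup C_r)+\rho(C_r\cup\dots\cup C_n).
\]
The two halves \emph{overlap} at $C_r$, and applying the induction hypothesis to each makes $C_r$ an endpoint of both, contributing $2\rho(C_r)+2\rho(C_r)=4\rho(C_r)$ --- exactly the interior coefficient. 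This is the classical Frink metrization argument, and it disposes of the whole induction in one stroke with no boundary cases.

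Your scheme instead balances the \emph{target} weighted sums $P_k$, $R_k$ against $T_n/2$, splits into disjoint halves $\cup_{i\leq k}C_i$ and $\cup_{i>k}C_i$, applies induction first, and only then glues via the two-continuum estimate. This works, but the lack of overlap is what forces the separate treatment of the endpoints $\rho(C_1)>T_n/4$ and $\rho(C_n)>T_n/4$: with disjoint halves the coefficient at the split point stays $2+2$ on the $P$--$R$ side, and the bookkeeping has to absorb the discrepancy elsewhere. (A minor point: in your two-continuum estimate you write ``pass to $f^k$'' with $k=N(C\cup C')$, but the minimum in the definition of $N$ may be realised at $-k$; the paper handles both signs explicitly.) What the paper's overlapping split buys is precisely the elimination of your boundary analysis; what your approach buys is that one never needs to evaluate $\rho$ on the intermediate unions $C_1\cup\dots\cup C_r$, only on the individual $C_i$.
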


\begin{proof}
First we will prove this result for $n=2$. In this case, it is enough to prove that if a non-trivial continuum $C$ can be written as a union $C=A\cup B$ where $A$ and $B$ are non-trvial continua, then
\begin{equation}
 \label{ecuGenTriang}
\rho(C)\leq 2 \max \{\rho(A),\rho(B)\}.
\end{equation}
To prove it observe that if $\diam(f^k(C))>\eps$, then either $$\diam(f^k(A))>\frac{\eps}{2} \,\,\,\,\,\, \text{or} \,\,\,\,\,\, \diam(f^k(B))>\frac{\eps}{2}$$ since $f^k(C)=f^k(A)\cup f^k(B)$.
This implies that either $$N(A)\leq m+N(C) \,\,\,\,\,\, \text{or} \,\,\,\,\,\, N(B)\leq m+N(C).$$ Indeed, in the case $\diam(f^{N(C)}(C))>\eps$ (the case $\diam(f^{-N(C)}(C))>\eps$ is analogous) the choice of $m$ assures that $$N(A)>m+N(C) \,\,\,\,\,\, \text{implies} \,\,\,\,\,\, \diam(f^{N(C)}(A))<\frac{\eps}{2}$$ and as observed this implies that $\diam(f^{N(C)}(B))>\frac{\eps}{2}$ which in turn implies that $N(B)\leq m+N(C)$. 
Assume for example that $N(A)\leq m+N(C)$. Since $0<\lambda<1$ it follows that
$$\rho(A)=\lambda^{N(A)}\geq \lambda^{m+N(C)}=\lambda^m\, \lambda^{N(C)}=\frac{1}{2}\rho(C),$$
that is, $2\rho(A)\geq \rho(C)$.
Similarly if $N(B)\leq m+N(C)$ we obtain $2\rho(B)\geq \rho(C)$ and this implies \eqref{ecuGenTriang}.
Arguing by induction, suppose that given $n\geq 3$, the conclusion of the lemma holds for every $k<n$ and let $C=\cup_{i=1}^n C_i$ with $C_i\cap C_{i+1}\neq\emptyset$ for every $i\in\{1,\dots,n-1\}$. 
Consider the following inequalities
\begin{equation}
\label{ecuCadenita}
\rho(C_1)\leq\rho(C_1\cup C_2)\leq\dots\leq \rho(C_1\cup\dots\cup C_{n-1})\leq \rho(C).
\end{equation}
If $\rho(C)\leq 2\rho(C_1)$, then \eqref{ecuF4} is proved and if $2\rho(C_1\cup\dots\cup C_{n-1})<\rho(C)$, then \eqref{ecuGenTriang}
implies that $\rho(C)\leq 2\rho(C_n)$, which also implies \eqref{ecuF4}.
Thus, we assume that $$2\rho(C_1)<\rho(C)\leq 2\rho(C_1\cup\dots\cup C_{n-1}).$$
This and \eqref{ecuCadenita} imply that there is
$1<r<n$ such that
$$2\rho(C_1\cup\dots\cup C_{r-1})<\rho(C)\leq 2\rho(C_1\cup\dots\cup C_{r}).$$
The first inequality and \eqref{ecuGenTriang} imply that
$$\rho(C)\leq 2\rho(C_r\cup\dots\cup C_{n}).$$
Thus,
\[
\rho(C)=\frac{\rho(C)}{2}+\frac{\rho(C)}{2}\leq \rho(C_1\cup\dots\cup C_{r}) + \rho(C_{r}\cup\dots\cup C_n).
\]
Since \eqref{ecuF4} holds for these two terms, by the induction assumption, the proof ends.
\end{proof}

\subsection{The shadowing property} Now we use the hyperbolic cw-metric constructed in the previous section to prove the shadowing property.

\begin{thm}
\label{teoCwHypSh}
Every cw-hyperbolic homeomorphism of a compact metric space satisfies the shadowing property.
\end{thm}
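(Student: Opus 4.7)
The plan is to mimic Bowen's classical proof of the shadowing lemma for hyperbolic systems, with three substitutions: the hyperbolic cw-metric $D$ of Theorem \ref{teoCwHyp} in place of Fathi's adapted metric, the cw-local-product-structure of Definition \ref{cwhyp} in place of the ordinary local product structure, and cw-expansiveness in place of expansiveness. The three ingredients Bowen uses (exponential contraction along local stable/unstable sets, intersection of local stable and local unstable of nearby points, and uniform control of tracking errors) all have direct cw-analogues supplied by the data already assembled in the preceding subsections.

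Concretely, given $\epsilon>0$, I would calibrate constants in two passes. Using the compatibility clause of Theorem \ref{teoCwHyp}(3), pick $\epsilon'\in(0,\epsilon/4)$ small enough that $D(C_{(p,q)})\le 4\epsilon'$ implies $\diam(C)<\epsilon/4$. The cw-local-product-structure yields $\delta_0>0$ with $d(x,y)<\delta_0\Rightarrow C^s_{\epsilon'}(x)\cap C^u_{\epsilon'}(y)\ne\emptyset$, and uniform continuity of $f$, combined with the hyperbolic rate $\lambda<1$ from Theorem \ref{teoCwHyp}(2), then produces a final $\delta>0$ absorbing the one-step amplification of errors. Given a $\delta$-pseudo orbit $(x_k)_{k\in\Z}$ and an integer $N\ge 1$, I would build a point $z_N$ that $\epsilon$-shadows the finite window $(x_k)_{|k|\le N}$ by inductively chaining intersections: starting from $p_{-N}=x_{-N}$, at each step one picks
\[
p_k\in C^s_{\epsilon'}(f(p_{k-1}))\cap C^u_{\epsilon'}(x_k),\qquad -N<k\le N,
\]
which is nonempty by the cw-local-product-structure so long as $d(f(p_{k-1}),x_k)<\delta_0$, an inductive invariant that the calibration of $\delta$ is designed to maintain. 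The exponential decay $D(f^n(C_{(p,q)}))\le 4\lambda^n D(C_{(p,q)})$ on stable continua and its backward counterpart on unstable continua ensure that the $D$-sizes of the corrections chained along the pseudo-orbit form a convergent geometric series; translating back via Theorem \ref{teoCwHyp}(3), each $f^k(z_N)$ lies within $\epsilon$ of $x_k$ for $|k|\le N$. A compactness argument then extracts a convergent subsequence $z_{N_j}\to z$, and passing to the limit at each fixed coordinate yields a point $z$ that $\epsilon$-shadows the full bi-infinite pseudo-orbit.

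The principal obstacle, distinguishing this from Bowen's original proof, is the \emph{non-uniqueness of intersection}: classically $W^s_\epsilon\cap W^u_\epsilon$ is a single point, so the shadowing point is canonically determined at each stage, whereas here $C^s_{\epsilon'}\cap C^u_{\epsilon'}$ can be a Cantor set (as in the example $g_A$ described in the introduction). Consequently, at each inductive step the choice of $p_k$ is not canonical, and one cannot propagate a single ``true'' shadowing orbit; one must argue instead that any choice works, controlling accumulated errors only through the continuum-wise quantity $D$. This is exactly what the generalized triangle inequality $D([A\cup B]_{(a,c)})\le D(A_{(a,b)})+D(B_{(b,c)})$ of Theorem \ref{teoCwHyp}(1)(c) is engineered for, since it allows stable and unstable corrections to be chained along the pseudo-orbit and measured continuum-wise rather than pointwise; the compatibility clause then converts these $D$-estimates into genuine metric estimates.
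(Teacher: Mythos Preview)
Your overall plan—Bowen's scheme with the cw-metric $D$, the cw-local-product-structure, and the generalized triangle inequality (1)(c) of Theorem~\ref{teoCwHyp} for chaining corrections into a geometric series—is exactly the paper's, and the direction of the construction (forward versus backward) and the final compactness extraction are inessential differences. The gap is in the single-step induction. To apply the cw-local-product-structure at step $k$ you need $d(f(p_{k-1}),x_k)<\delta_0$; from the previous step you only know $p_{k-1}\in C^u_{\epsilon'}(x_{k-1})$, hence $d(p_{k-1},x_{k-1})\le\epsilon'$, and uniform continuity of $f$ bounds $d(f(p_{k-1}),f(x_{k-1}))$ by a quantity depending only on $\epsilon'$. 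You would need this bound plus $\delta$ to be below $\delta_0$, but $\delta_0$ is produced by the cw-local-product-structure \emph{for the scale $\epsilon'$} and so is fixed after $\epsilon'$ and typically much smaller than $\epsilon'$; no choice of $\delta$ alone closes this circle. The hyperbolicity of $D$ does not rescue a single step either: with $\lambda=2^{-1/m}$ one has $4\lambda=2^{2-1/m}\ge 2$, so $D(f(C_{(p,q)}))\le 4\lambda\, D(C_{(p,q)})$ is not a contraction.

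The paper breaks this circularity by working in blocks of a length $n$ chosen \emph{after} all the other constants: it fixes $\epsilon',\eta,\xi,\epsilon,\delta,\gamma$ via compatibility and the cw-local-product-structure, and only then takes $n$ large enough that $4\lambda^n\xi<\gamma$, which through compatibility forces $\diam(f^{-n}(C^u_\epsilon(\cdot)))<\delta/2$. The inductive step is $x'_{(r-1)n}=f^{-n}(y'_{rn})$ with $y'_{rn}\in C^u_\epsilon(f^n(x_{(r-1)n}))\cap C^s_\epsilon(x'_{rn})$, and the $f^{-n}$-contraction of the unstable continuum is precisely what restores the invariant $d(x'_{(r-1)n},x_{(r-1)n})<\delta/2$ needed for the next application of the product structure. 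The pseudo-orbit gap $\alpha$ is then chosen, by uniform continuity of $f,\dots,f^n$, so that an $\alpha$-pseudo-orbit of length $n$ stays $\delta/2$-close to a true orbit segment, handling whole blocks at once. With the construction set up in blocks, your geometric-series and compactness arguments go through unchanged.
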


\begin{proof} In this proof we use the metric properties, the hyperbolicity of $D$ and the compatibility between $\diam$ and $D$, when necessary. Consider $\lambda\in(0,1)$ given by the hyperbolicity of $D$ and let $c>0$ be a cw-expansivity constant of $f$ (with respect to $\diam$). Let $\beta>0$ be given and choose $$\eps'=\min\left\{\frac{\beta}{3},\frac{c}{3}\right\}.$$
The compatibility of $D$ and $\diam$ assures the existence of $\eta\in(0,\eps')$ such that
$$D(C_{(p,q)})<\eta \,\,\,\,\,\, \text{for every} \,\,\,\,\,\, p,q\in C \,\,\,\,\,\, \text{implies} \,\,\,\,\,\, \diam(C)<\eps'.$$ Now choose $\xi\in(0,\eta)$ such that $$\left(\frac{4}{1-\lambda}\right)\xi<\eta$$ and use the compatibility between $D$ and $\diam$ to choose $\eps\in(0,\xi)$ such that
$$\diam(C)<2\eps \,\,\,\,\,\, \text{implies} \,\,\,\,\,\, D(C_{(p,q)})<\xi \,\,\,\,\,\, \text{for every} \,\,\,\,\,\, p,q\in C.$$
In particular, $$D(C^s_{\eps}(x)_{(p,q)})<\xi \,\,\,\,\,\, \text{and} \,\,\,\,\,\, D(C^u_{\eps}(x)_{(p',q')})<\xi$$
for every $p,q\in C^s_{\eps}(x)$, $p',q'\in C^u_{\eps}(x)$ and every $x\in X$.
Let $\delta\in(0,\eps)$, given by the cw-local product structure, be such that 
$$d(x,y)<\delta \,\,\,\,\,\, \text{implies} \,\,\,\,\,\, C^u_{\eps}(x)\cap C^s_{\eps}(y)\neq\emptyset.$$
The compatibility again assures the existence of $\gamma\in(0,\frac{\delta}{2})$ such that
$$D(C_{(p,q)})<\gamma \,\,\,\,\,\, \text{for every} \,\,\,\,\,\, p,q\in C \,\,\,\,\,\, \text{implies} \,\,\,\,\,\, \diam(C)<\frac{\delta}{2}.$$ Choose $n>0$ large enough to obtain $4\lambda^n\xi<\gamma$ and finally choose $\alpha\in(0,\gamma)$, given by uniform continuity of $f$, such that if $(y_i)_{i=0}^n$ is an $\alpha$-pseudo orbit of $f$ with length $n$, then $$d(f^i(y_0),y_i)<\frac{\delta}{2} \,\,\,\,\,\, \text{for every} \,\,\,\,\,\, i\in\{0,\dots,n\}.$$ To prove that every infinite $\alpha$-pseudo orbit $(x_i)_{i\in\Z}$ is $\beta$-shadowed it is enough to prove this for $(x_i)_{i=0}^{rn}$ for any $r\in\N$. As a first step, let $x'_{rn}=x_{rn}$ and note that $$d(f^n(x_{(r-1)n}),x'_{rn})<\delta.$$ The cw-local product structure assures the existence of $$y'_{rn}\in C^u_{\eps}(f^n(x_{(r-1)n}))\cap C^s_{\eps}(x'_{rn})$$ and then we can define $x'_{(r-1)n}=f^{-n}(y'_{rn})$. To simplify the notation, let $$C^r=C^u_{\eps}(f^n(x_{(r-1)n})).$$ It follows from the hyperbolicity of $D$ that
$$D(f^{-n}(C^r_{(p,q)}))\leq 4\lambda^nD(C^r_{(p,q)})\leq 4\lambda^n\xi <\gamma \,\,\,\,\,\, \text{for every} \,\,\,\,\,\, p,q\in C^r.$$
The compatibility of $\diam$ and $D$ assures that $$\diam(f^{-n}(C^r))<\frac{\delta}{2},$$ and, hence,
\begin{eqnarray*}
d(x'_{(r-1)n},x_{(r-1)n})&=&d(f^{-n}(y'_{rn}),f^{-n}(f^n(x_{(r-1)n})))\\
&\leq& \diam(f^{-n}(C^r))<\frac{\delta}{2}.
\end{eqnarray*}
Since $$d(f^n(x_{(r-2)n}),x_{(r-1)n})<\frac{\delta}{2},$$ it follows that $$d(f^n(x_{(r-2)n}),x'_{(r-1)n})<\delta.$$ Choose $$y'_{(r-1)n}\in C^u_{\eps}(f^n(x_{(r-2)n}))\cap C^s_{\eps}(x'_{(r-1)n})$$ and let $x'_{(r-2)n}=f^{-n}(y'_{(r-1)n})$. By induction, we construct the sequence $(x'_{in})_{i=0}^r$.
We will prove that $x=x'_0$ is the shadowing point of $(x_i)_{i=0}^{rn}$. For $0\leq i\leq rn$ let $s\geq0$ be such that $sn\leq i<(s+1)n$. Note that $d(f^i(x),x_i)$ is bounded above by $$d(f^i(x),f^{i-sn}(x'_{sn}))+d(f^{i-sn}(x'_{sn}),f^{i-sn}(x_{sn}))+d(f^{i-sn}(x_{sn}),x_i).$$ The second term is smaller than $\frac{\beta}{3}$ since $f^n(x'_{sn})\in C^u_{\eps}(f^n(x_{sn}))$ and $i-sn<n$. The third term is also smaller than $\frac{\beta}{3}$ by the choice of $\alpha$. To see that the first term is bounded by $\frac{\beta}{3}$, consider the set $$\mathcal{C}=\bigcup_{j=0}^sf^{i-jn}(C_{\eps}^s(x'_{jn})).$$
Note that $\mathcal{C}$ is a continuum since for each $j\in\{0,\dots,s\}$ we have 
$$f^{i-jn}(x_{jn}')\in f^{i-jn}(C_{\eps}^s(x'_{jn}))\cap f^{i-(j-1)n}(C_{\eps}^s(x'_{(j-1)n})).$$ 
The second inclusion holds because the cw-local-product-structure assures that $$x'_{jn}\in C_{\eps}^s(f^n(x'_{(j-1)n})).$$ The hyperbolicity of $D$ assures that $$D(f^{i-jn}(C^s(x'_{jn})_{(p,q)}))\leq 4\lambda^{i-jn}\xi$$ 
for every $p,q\in C^s(x'_{jn})$ and every $j\in\{0,\dots,s\}$. 
Thus, we can use the metric and hyperbolic properties of $D$ to obtain
\begin{eqnarray*}
D(\mathcal{C}_{(f^i(x),f^{i-sn}(x'_{sn}))})&\leq&D(f^{i}(C^s_{\eps}(x))_{(f^{i}(x),f^{i}(x))})\\
&+&\sum_{j=1}^{s}D(f^{i-jn}(C^s_{\eps}(x'_{jn}))_{(f^{i-(j-1)n}(x_{(j-1)n}'),f^{i-jn}(x_{jn}'))})\\
&\leq& \sum_{j=0}^s4\lambda^{i-jn}\xi=4\lambda^{i-sn}\xi\sum_{j=0}^s\lambda^{(s-j)n}\\
&\leq&\left(\frac{4}{1-\lambda^n}\right)\xi<\left(\frac{4}{1-\lambda}\right)\xi<\eta,
\end{eqnarray*}
which, in turn, implies that $\diam(\mathcal{C})<\eps'$. Finally, note that both $f^i(x)$ and $f^{i-sn}(x'_{sn})$ belong to $\mathcal{C}$ and, therefore, $$d(f^i(x),f^{i-sn}(x'_{sn}))\leq\diam(\mathcal{C})<\eps'<\frac{\beta}{3}$$ completing the proof of the shadowing property.
\end{proof}

\section{CwN-hyperbolicity} In the definition of cw-local-product-structure we ask that $\eps$-stable and $\eps$-unstable continua of sufficiently close points intersect, but nothing is said about the set of intersections between these continua. At first, they could intersect in just a single point, or a cantor set. We can distinguish cw-hyperbolic homeomorphisms with respect to the cardinality of these sets of intersections through the following definitions.

\begin{definition}
A cw-expansive homeomorphism $f$ is said to be cwN-expansive if there exists $c>0$ such that
$$\#(C^s_c(x)\cap C^u_c(x))\leq N \,\,\,\,\,\, \text{for every} \,\,\,\,\,\, x\in X.$$
We say that $f$ is \emph{cwN-hyperbolic} if it is cw-hyperbolic and cwN-expansive. In this case we say that $c$ is a cwN-expansivity constant.
\end{definition}

It is easy to note that expansive homeomorphisms are cw1-expansive. The linear cw-Anosov diffeomorphisms of $\mathbb{S}^2$ are cw2-hyperbolic and not cw1-hyperbolic. We prove in Theorem \ref{thmproduct} and Corollary \ref{corproduct} that we can construct cwN-hyperbolic homeomorphisms with arbitrarily large $N\in\N$. We can similarly define cw(finite)-expansivity and cw(countably)-expansivity and then define cw(finite)-hyperbolicity and cw(countably)-hyperbolicity, though we still do not have examples on these classes that are not cwN-hyperbolic for any $N\in\N$.

Note the differences between these definitions and the definitions of N-expansivity, finite-expansivity and countable-expansivity. The latters impose restrictions on the dynamical balls $W^s_c(x)\cap W^u_c(x)$, asking that they contain at most N points, a finite or a countable set. In the definition of cwN-expansivity, the restrictions are on the set $C^s_c(x)\cap C^u_c(x)$ which can be seen as a dynamical ball by local stable/unstable continua. Also, using the L-shadowing property as in Corollary C of \cite{ACCV} we can prove that cw-hyperbolic homeomorphisms, that are not topologically hyperbolic, cannot be countably expansive, neither entropy expansive. The following theorem is based on the arguments of \cite{ArtigueDend} and \cite{Hiraide}.

\begin{theorem}\label{cwN-hyperbolic}
A homeomorphism $f$ of a compact metric space $X$ is cw1-hyperbolic if, and only if, $f$ is topologically hyperbolic and $X$ is a Peano continuum.
\end{theorem}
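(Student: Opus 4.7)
The theorem is an equivalence, and the two directions have rather different flavors.

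For the easy direction (topologically hyperbolic on a Peano continuum $\Rightarrow$ cw1-hyperbolic), if $c>0$ is an expansivity constant then the inclusions $C^s_c(x)\subset W^s_c(x)$ and $C^u_c(x)\subset W^u_c(x)$, combined with $W^s_c(x)\cap W^u_c(x)=\{x\}$, immediately give cw1-expansivity (and a fortiori cw-expansivity). The cw-local-product-structure I would deduce from Bowen's classical local product structure: expansivity plus shadowing produces, for each small $\epsilon>0$, a $\delta>0$ such that $W^s_\epsilon(x)\cap W^u_\epsilon(y)\neq\emptyset$ whenever $d(x,y)<\delta$. To upgrade this to an intersection of local stable and unstable \emph{continua} one needs $W^s_\epsilon(x)=C^s_\epsilon(x)$ and $W^u_\epsilon(y)=C^u_\epsilon(y)$ for small $\epsilon$; this connectedness of local stable/unstable sets of topologically hyperbolic homeomorphisms on Peano continua is exactly a result in the spirit of \cite{Hiraide}, which I would invoke.

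For the hard direction (cw1-hyperbolic $\Rightarrow$ topologically hyperbolic), the fact that $X$ is a Peano continuum is already recorded after Definition \ref{cwhyp} and the shadowing property is Theorem \ref{teoCwHypSh}, so only expansivity has to be proved. Let $c_0>0$ be a cw1-expansivity constant and fix $\epsilon\in(0,c_0/2)$ together with $\delta>0$ from the cw-local-product-structure. Assume, toward a contradiction, that expansivity fails at every scale: then there are $x_n\in X$ and $y_n\ne x_n$ with $y_n\in W^s_{1/n}(x_n)\cap W^u_{1/n}(x_n)$, and after passing to a subsequence $x_n\to x^*$ and $y_n\to x^*$. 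Apply the cw-local-product-structure to $(x_n,y_n)$ and to $(y_n,x_n)$ to obtain $z_n\in C^s_\epsilon(x_n)\cap C^u_\epsilon(y_n)$ and $w_n\in C^u_\epsilon(x_n)\cap C^s_\epsilon(y_n)$. Kato's uniform lower bound on the diameters of the continua $C^s_\epsilon(\cdot)$, $C^u_\epsilon(\cdot)$, combined with compactness of $\C(X)$ in the Hausdorff metric, lets me pass to Hausdorff limits $S$ and $U$ of $C^s_\epsilon(x_n)$ and $C^u_\epsilon(y_n)$; upper semicontinuity of the local stable and unstable continua at $x^*$ gives $S\subset C^s_{c_0}(x^*)$ and $U\subset C^u_{c_0}(x^*)$, so that $S\cap U\subset\{x^*\}$ by cw1-expansivity. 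The contradiction would then come from producing a point of $S\cap U$ distinct from $x^*$ via an iterative application of the cw-local-product-structure along the orbits of $x_n$ and $y_n$, controlling the resulting continua by the hyperbolic cw-metric $D$ of Theorem \ref{teoCwHyp}. This iterative step is the scheme used in \cite{Hiraide} and \cite{ArtigueDend}.

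The main obstacle will be twofold. First, the semicontinuity of $x\mapsto C^s_\epsilon(x)$ and $x\mapsto C^u_\epsilon(x)$ in the Hausdorff topology is not automatic and has to be extracted from cw-expansivity and the diameter bound of Kato. Second, and more delicate, is ensuring that the intersection points $z_n$ do not degenerate to $x_n$ in the limit (otherwise $z^*=x^*$ and no information is obtained). Ruling this degeneracy out is where the hyperbolic cw-metric plays the decisive role: its exponential estimates on $\C^s_\epsilon(X)$ and $\C^u_\epsilon(X)$ force a uniform separation between the constructed intersection points and the base point, providing the required non-trivial point of $C^s_{c_0}(x^*)\cap C^u_{c_0}(x^*)$.
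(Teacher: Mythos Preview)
Your easy direction is essentially the paper's: invoke Hiraide's results on local stable/unstable sets of expansive homeomorphisms with shadowing on Peano continua to pass from the usual local product structure to the cw one. The paper is slightly more careful, using only the inclusion $W^s_\rho(x)\subset C^s_{\epsilon,\delta}(x)$ and the canonical-coordinate homeomorphism $[\,\cdot\,,\,\cdot\,]$, but the idea is the same.

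For the hard direction, however, the paper does something considerably simpler than your plan, and your plan carries a real risk. The paper does \emph{not} use any limiting argument and does \emph{not} use the hyperbolic cw-metric. It fixes a single scale $\delta'$ (obtained by two nested applications of the cw-local-product-structure below the cw1-expansivity constant $c$) and shows directly that $\delta'$ is an expansivity constant. Given $y\in\Gamma(x,\delta')\setminus\{x\}$, one picks $z\in C^u_\delta(x)\cap C^s_\delta(y)$; since $z\neq y$ (say), Kato's Proposition~2.1 (diameters of forward iterates of stable continua tend to zero) guarantees a \emph{first} time $n_0$ with $f^{-n_0}(y)\notin C^s_\epsilon(f^{-n_0}(z))$. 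At that instant $f^{-n_0}(z)$ and $f^{-n_0}(y)$ are still $\delta$-close, so a second application of the cw-local-product-structure yields $z'\in C^u_\epsilon(f^{-n_0}(z))\cap C^s_\epsilon(f^{-n_0}(y))$ with $z'\neq f^{-n_0}(z)$, and both continua sit inside $C^u_c(f^{-n_0}(z))$ and $C^s_c(f^{-n_0}(z))$, contradicting cw1-expansivity. The whole argument is four lines and needs only cw-local-product-structure twice plus Kato's shrinking lemma.

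By contrast, your scheme takes $y_n\in W^s_{1/n}(x_n)\cap W^u_{1/n}(x_n)$ and passes to Hausdorff limits $S,U$. You correctly identify the obstacle: nothing prevents the intersection points $z_n$ from collapsing to $x^*$, so $S\cap U$ could well be $\{x^*\}$ and no contradiction arises. Your proposed remedy---the hyperbolic cw-metric---does not obviously help here: $D$ gives \emph{upper} bounds on iterates of stable/unstable continua, not lower bounds separating $z_n$ from $x_n$. The ``iterative application along orbits'' you allude to is in fact the entire content of the paper's argument, and once you do it for a fixed pair $(x_n,y_n)$ at a fixed scale you already have the contradiction without ever taking a limit. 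So drop the sequences $x_n,y_n$, the Hausdorff limits, and the hyperbolic cw-metric; work at one fixed scale and use the ``first escape time'' idea instead.
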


\begin{proof}
Suppose first that $X$ is a Peano continuum and that $f$ is expansive, with expansivity constant $c>0$, and satisfies the shadowing property. To prove cw1-hyperbolicity it is enough to prove the cw-local product structure since expansivity clearly implies cw1-expansivity. Let $\eps_0=\frac{c}{4}$ and for each $0<\eps<\eps_0$, let $\delta\in(0,\eps)$ be given by the shadowing property such that
$$d(x,y)<\delta \,\,\,\,\,\, \text{implies} \,\,\,\,\,\, W^u_{\eps}(x)\cap W^s_{\eps}(y)\neq\emptyset.$$
For each $x\in X$, let $C^s_{\eps,\delta}(x)$ denote the connected component of $W^s_{\eps}(x)\cap B(x,\delta)$ that contains $x$ and also $C^u_{\eps,\delta}(x)$ denote the connected component of $W^u_{\eps}(x)\cap B(x,\delta)$ that contains $x$.
%Note that $C^s_{\eps,\delta}(x)\subset C^s_{\eps}(x)$.
Since $X$ is a Peano continuum, Proposition 1.1  of \cite{Hiraide} and also the first paragraph of the proof of Lemma 1.3 in the same paper, assure the existence of $\rho\in(0,\delta)$ such that $$W^s_{\rho}(x)\subset C^s_{\eps,\delta}(x) \,\,\,\,\,\, \text{and} \,\,\,\,\,\, W^u_{\rho}(x)\subset C^u_{\eps,\delta}(x) \,\,\,\,\,\, \text{for every} \,\,\,\,\,\, x\in X.$$
Let $\delta'\in(0,\rho)$, given by the shadowing property, be such that $$d(x,y)<2\delta' \,\,\,\,\,\, \text{implies} \,\,\,\,\,\, W^u_{\rho}(x)\cap W^s_{\rho}(y)\neq\emptyset.$$
Since $f$ is expansive, the set $W^u_{\rho}(x)\cap W^s_{\rho}(y)$ is a singleton and we denote this point by $[x,y]$. For each $x\in X$, this defines a map
$$[.,.]\colon B(x,\delta')\times B(x,\delta')\to X$$ and we consider the restriction $$[.,.]\colon C^s_{\eps,\delta'}(x)\times C^u_{\eps,\delta'}(x)\to N_x,$$ where $N_x:=[C^s_{\eps,\delta'}(x),C^u_{\eps,\delta'}(x)]$. As in Proposition 1.1 of \cite{Hiraide}, this restriction is a homeomorphism and, hence, $N_x$ is connected and open in $X$ and there exists $\rho'>0$ satisfying $$B(x,\rho')\subset N_x \,\,\,\,\,\, \text{for every} \,\,\,\,\,\, x\in X.$$ 
Thus, if $d(x,y)<\rho'$, then $y\in N_x$ and there exists $w\in C^s_{\eps,\delta'}(x)$ and $z\in C^u_{\eps,\delta'}(x)$ such that $[w,z]=y$. Thus, 
$$w\in C^s_{\eps,\delta'}(x)\cap W^u_{\rho}(y) \,\,\,\,\,\, \text{and} \,\,\,\,\,\, z\in C^u_{\eps,\delta'}(x)\cap W^s_{\rho}(y).$$ Since
$$W^u_{\rho}(y)\subset C^u_{\eps,\delta}(y) \,\,\,\,\,\, \text{and} \,\,\,\,\,\, W^s_{\rho}(z)\subset C^s_{\eps,\delta}(z),$$
it follows that $$w\in C^s_{\eps,\delta'}(x)\cap  C^u_{\eps,\delta}(y)  \,\,\,\,\,\, \text{and} \,\,\,\,\,\, z\in C^u_{\eps,\delta'}(x)\cap C^s_{\eps,\delta}(z).$$
This proves, in particular, that for each $\eps>0$ there is $\rho'>0$ such that $d(x,y)<\rho'$ implies $$C^s_{\eps}(x)\cap C^u_{\eps}(y)\neq\emptyset \,\,\,\,\,\, \text{and} \,\,\,\,\,\, C^u_{\eps}(x)\cap C^s_{\eps}(y)\neq\emptyset$$ and the cw-local product structure is proved.

Now, assume that $f$ is cw1-hyperbolic and we will prove that $f$ is expansive (the shadowing property is proved for every cw-hyperbolic homeomorphism). Let $c>0$ be such that
$$C^s_c(x)\cap C^u_c(x)=\{x\} \,\,\,\,\,\, \text{for every} \,\,\,\,\,\, x\in X.$$
%and consider $\delta>0$, given by the cw-local-product-structure, such that
%$$d(x,y)<\delta \,\,\,\,\,\, \text{implies} \,\,\,\,\,\, C^u_{\frac{c}{2}}(x)\cap C^s_{\frac{c}{2}}(y)\neq\emptyset.$$
Let $0<\eps<\frac{c}{2}$, given by uniform continuity of $f$, be such that $\diam(A)<2\eps$ implies
$$\max\{\diam(f(A)),\diam(f^{-1}(A))\}<\frac{c}{2}.$$
Consider $0<\delta<\eps$, given by the cw-local-product-structure, such that
$$d(x,y)<\delta \,\,\,\,\,\, \text{implies} \,\,\,\,\,\, C^u_{\eps}(x)\cap C^s_{\eps}(y)\neq\emptyset$$
and also $\delta'>0$ such that
$$d(x,y)<\delta' \,\,\,\,\,\, \text{implies} \,\,\,\,\,\, C^u_{\delta}(x)\cap C^s_{\delta}(y)\neq\emptyset.$$
We will prove that $\delta'$ is an expansive constant of $f$. Indeed, if there exists $x\in X$ and $y\in\Gamma(x,\delta')\setminus\{x\}$, then we can choose
$$z\in C^u_{\delta}(x)\cap C^s_{\delta}(y).$$ Since $x\neq y$ it follows that either $z\neq y$ or $z\neq x$. We deal with the first case and note that the second is analogous. We observe that there exists $n\in\N$ such that $$f^{-n}(y)\notin C^s_{\eps}(f^{-n}(z)).$$ Indeed, if $f^{-n}(y)\in C^s_{\eps}(f^{-n}(z))$ for every $n\in\N$, then
$$d(z,y)\leq\lim_{n\to\infty}\diam f^n(C^s_{\eps}(f^{-n}(z)))$$
and this limit is 0 by Proposition 2.1 in \cite{Kato93}, which contradicts $z\neq y$. Then we can choose $n_0\in\N$ the least natural number such that  $f^{-n_0}(y)\notin C^s_{\eps}(f^{-n_0}(z))$. Since
$$d(f^{-n_0}(z),f^{-n_0}(y))\leq\delta$$ the cw-local-product-structure applies again to ensure the existence of
$$z'\in C^u_{\eps}(f^{-n_0}(z))\cap C^s_{\eps}(f^{-n_0}(y)).$$
Thus, $$z'\in C^u_c(f^{-n_0}(z))\cap C^s_c(f^{-n_0}(z)),$$ since $f^{-n_0}(y)\in C^s_{\frac{c}{2}}(f^{-n_0}(z))$, and $z'\neq f^{-n_0}(z)$ because $z'\in C^s_{\eps}(f^{-n_0}(y))$ and $f^{-n_0}(z)\notin C^s_{\eps}(f^{-n_0}(y))$. This contradicts cw1-expansivity and proves (1).
\end{proof}

Now we introduce examples of cwN-hyperbolic homeomorphisms for arbitrarily large $N\in\N$. In what follows we endow the set $X\times Y$ with the maximum metric and consider the homeomorphism $f\times g\colon X\times Y\to X\times Y$ defined by $$[f\times g](x,y)=(f(x),g(y)).$$

\begin{theorem}\label{thmproduct}
Let $X,Y$ be compact metric spaces, $f\colon X\to X$ and $g\colon Y\to Y$ be homeomorphisms. The following hold:
\begin{enumerate}
	\item If $f$ and $g$ are cw-expansive, then $f\times g$ is cw-expansive.
	\item If $f$ and $g$ are cw-hyperbolic, then $f\times g$ is cw-hyperbolic.
	\item If $f$ is cwN-hyperbolic and $g$ is cwM-hyperbolic, then $f\times g$ is cwNM-hyperbolic.
	\item If, in addition, $f$ is not cw(N-1)-expansive and $g$ is not cw(M-1)-expansive, then $f\times g$ is not cw(NM-1)-expansive.
\end{enumerate}
\end{theorem}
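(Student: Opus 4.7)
The plan is to reduce the entire theorem to one structural identity: on the product $X\times Y$ equipped with the maximum metric, local stable/unstable sets and their connected components split as Cartesian products. Directly from the definition of $W^s_c$ and the fact that the max metric decouples coordinates, one checks
\[
W^s_c\bigl((x,y)\bigr)=W^s_c(x)\times W^s_c(y),\qquad W^u_c\bigl((x,y)\bigr)=W^u_c(x)\times W^u_c(y).
\]
Next, for any $A\subset X$, $B\subset Y$ and $(a,b)\in A\times B$, the connected component of $(a,b)$ in $A\times B$ equals the product of the component of $a$ in $A$ with the component of $b$ in $B$: the product of connected sets is connected (so the right-hand side lies in the component), while the projections of the component are connected sets containing $a$ and $b$ respectively (so the component lies in the product of those individual components). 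Combining this with the previous identity yields the key relations
\[
C^s_c\bigl((x,y)\bigr)=C^s_c(x)\times C^s_c(y)\quad\text{and}\quad C^u_c\bigl((x,y)\bigr)=C^u_c(x)\times C^u_c(y).
\]

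With this identity in hand, items (1)--(3) become short checks. For (1), $W^s_c\cap W^u_c$ at $(x,y)$ is the product of two totally disconnected sets, hence totally disconnected. For (2), given $\eps>0$, pick $\delta_1,\delta_2$ from the cw-local-product-structure of $f$ and $g$ for this $\eps$ and set $\delta=\min\{\delta_1,\delta_2\}$: if $d_{\max}((x,y),(x',y'))<\delta$, choose $a\in C^s_\eps(x)\cap C^u_\eps(x')$ and $b\in C^s_\eps(y)\cap C^u_\eps(y')$; by the product identity $(a,b)\in C^s_\eps((x,y))\cap C^u_\eps((x',y'))$. For (3), take $c$ to be a common cwN- and cwM-expansivity constant, so that the product identity gives
\[
\#\bigl(C^s_c((x,y))\cap C^u_c((x,y))\bigr)=\#\bigl(C^s_c(x)\cap C^u_c(x)\bigr)\cdot\#\bigl(C^s_c(y)\cap C^u_c(y)\bigr)\le NM.
\]

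For (4) I would use the contrapositive reading of the hypothesis: $f$ not cw$(N-1)$-expansive means that for every $c>0$ there exists $x\in X$ with $\#(C^s_c(x)\cap C^u_c(x))\ge N$, and similarly for $g$ with $M$. Given any $c>0$, pick such $x$ and $y$; the cardinality identity above then yields $\#(C^s_c((x,y))\cap C^u_c((x,y)))\ge NM$, so $f\times g$ is not cw$(NM-1)$-expansive. The only step that merits genuine care is the identification of connected components in a product space; once that lemma is nailed down, everything else is purely set-theoretic bookkeeping flowing from the single decoupling observation at the start.
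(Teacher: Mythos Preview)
Your argument is correct, and the key lemma you isolate---that with the max metric $C^s_c((x,y))=C^s_c(x)\times C^s_c(y)$ (and likewise for $C^u_c$)---is both true and the cleanest way to organize the proof. The paper's proof rests on the same coordinate-decoupling idea but never states this identity as such: it argues via the projections $\pi_1,\pi_2$, invoking their uniform continuity (unnecessary with the max metric, where they are $1$-Lipschitz), and for part~(3) it runs a pigeonhole count on $NM+1$ points rather than your one-line cardinality product. Your version is tidier; the paper's version has the minor virtue that its projection arguments would survive a change of product metric (where your exact equality $W^s_c((x,y))=W^s_c(x)\times W^s_c(y)$ would weaken to inclusions), but since both proofs explicitly fix the max metric this is not a real distinction.
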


\begin{proof}
Let $\pi_1$ and $\pi_2$ denote the projections of $X\times Y$ on $X$ and $Y$, respectively. Since these are continuous maps, the image of any continuum in $X\times Y$ is also a continuum in $X$ or $Y$. Let $\eps>0$ be the minimun of the constants given by cw-expansivity of $f$ and $g$ and consider $\eps'>0$ the minimum of the constants given by the uniform continuity of $\pi_1$ and $\pi_2$ for $\eps$. We will prove that $\eps'$ is a cw-expansivity constant for $f\times g$. Indeed, if $C\subset X\times Y$ is a non-trivial continuum satisfying
$$\diam([f\times g]^k(C))\leq\eps' \,\,\,\,\,\, \text{for every} \,\,\,\,\,\, k\in\Z,$$ then $\pi_1(C)\subset X$ is a continuum satisfying
$$\diam(f^k(\pi_1(C)))=\diam(\pi_1([f\times g]^k(C)))\leq\eps \,\,\,\,\,\, \text{for every} \,\,\,\,\,\, k\in\Z$$
and $\pi_2(C)\subset Y$ is a continuum satisfying
$$\diam(g^k(\pi_2(C)))=\diam(\pi_2([f\times g]^k(C)))\leq\eps \,\,\,\,\,\, \text{for every} \,\,\,\,\,\, k\in\Z.$$ Since $C$ is non-trivial, either $\pi_1(C)$ or $\pi_2(C)$ are non-trivial and this contradicts cw-expansivity of either $f$ or $g$, so (1) is proved. Now, for each $\eps>0$, let $\delta>0$ be the minimum of the constants given by the cw-local-product-structure of $f$ and $g$. We will prove that $\delta$ is the constant of the cw-local-product-structure for $f\times g$. If $x$ and $y$ are $\delta$-close in $X\times Y$, then $\pi_1(x),\pi_1(y)$ are $\delta$-close in $X$ and $\pi_2(x),\pi_2(y)$ are $\delta$-close in $Y$. The cw-local-product-structure of $f$ and $g$ assure the existence of
$$z_1\in C^u_{\eps}(\pi_1(x))\cap C^s_{\eps}(\pi_1(y)) \,\,\,\,\,\, \text{and} \,\,\,\,\,\, z_2\in C^u_{\eps}(\pi_2(x))\cap C^s_{\eps}(\pi_2(y)).$$
Thus,
$$(z_1,z_2)\in [C^u_{\eps}(\pi_1(x))\times C^u_{\eps}(\pi_2(x))]\cap [C^s_{\eps}(\pi_1(y))\times C^s_{\eps}(\pi_2(y))]$$
and, hence, $(z_1,z_2)$ belongs to the intersection of the $\eps$-unstable continuum of $x$ and the $\eps$-stable continuum of $y$ for $f\times g$. This proves (2). To prove (3), let $\eps>0$ be the minimun of the constants given by cwN-expansivity of $f$ and cwM-expansivity of $g$ and consider $\eps'>0$ the minimum of the constants given by the uniform continuity of $\pi_1$ and $\pi_2$ for $\eps$. We will prove that $\eps'$ is a cwNM-expansivity constant for $f\times g$. Let $z_1,\dots,z_{NM+1}$ be distinct points that belong to $C^u_{\eps'}(z_1)\cap C^s_{\eps'}(z_1)$. Then as in the proof of (1) it follows that $$\pi_1(z_i)\in C^u_{\eps}(\pi_1(z_1))\cap C^s_{\eps}(\pi_1(z_1)) \,\,\,\,\,\, \text{and} \,\,\,\,\,\, \pi_2(z_i)\in C^u_{\eps}(\pi_2(z_1))\cap C^s_{\eps}(\pi_2(z_1))$$
for every $i\in\{1,\dots,NM+1\}$. Since $f$ is cwN-expansive, there exist $z_{i_1},\dots,z_{i_N}$ such that
$$\pi_1(z_i)=\pi_1(z_{i_j}) \,\,\,\,\,\, \text{for some} \,\,\,\,\,\, j\in\{1,\dots,N\}.$$
Note that  if $k\neq i_j$ and $\pi_1(z_k)=\pi_1(z_{i_j})$, then $\pi_2(z_k)\neq\pi_2(z_{i_j})$, since $z_k\neq z_{i_j}$.
Also, there exists at least one $j\in\{1,\dots,N\}$ such that $\pi_1^{-1}(z_{i_j})$ contains M+1 distinct points of $z_1,\dots,z_{NM+1}$.
These contradict the cwM-expansivity of $g$ and proves (3). Now, suppose that $f$ is not cw(N-1)-expansive and $g$ is not cw(M-1)-expansive. Then for every $\eps>0$ there exist $x_1,\dots,x_N\in X$ and $y_1,\dots,y_M\in Y$ such that
$$x_i\in C^u_{\eps}(x_1)\cap C^s_{\eps}(x_1) \,\,\,\,\,\, \text{and} \,\,\,\,\,\, y_j\in C^u_{\eps}(y_1)\cap C^s_{\eps}(y_1)$$ and as in the proof of (2),
$$(x_i,y_j)\in C^u_{\eps}(x_1,y_1)\cap C^s_{\eps}(x_1,y_1)$$
for every $i\in\{1,\dots,N\}$ and $j\in\{1,\dots,M\}$. Since this can be done for each $\eps>0$, it follows that $f\times g$ is not cw(NM-1)-expansive and finishes the proof.
\end{proof}

\begin{corollary}\label{corproduct}
For each $N\in\N$ there exists a cw$(2^N)$-hyperbolic homeomorphism that is not cw$(2^N-1)$-expansive.
\end{corollary}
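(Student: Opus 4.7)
The plan is a straightforward induction on $N$ using Theorem \ref{thmproduct} with the linear cw-Anosov diffeomorphism $g_A\colon \mathbb{S}^2\to \mathbb{S}^2$ playing the role of the building block. The base case $N=1$ is already available in the excerpt: $g_A$ is cw$2$-hyperbolic and not cw$1$-expansive, as observed both in the paragraph following Definition \ref{cwhyp} and immediately after the definition of cwN-hyperbolicity (the four singularities arising from the quotient by the antipodal map produce, near each singularity, a second point lying in both the $\varepsilon$-stable and the $\varepsilon$-unstable continuum, which destroys cw$1$-expansivity while preserving cw$2$-expansivity).

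For the inductive step, suppose that a cw$(2^N)$-hyperbolic homeomorphism $f_N\colon X_N\to X_N$ has been constructed which fails to be cw$(2^N-1)$-expansive. Set
\[
f_{N+1}=f_N\times g_A\colon X_N\times\mathbb{S}^2\to X_N\times\mathbb{S}^2.
\]
Applying part~(3) of Theorem \ref{thmproduct} with the two factors being cw$(2^N)$- and cw$2$-hyperbolic respectively, $f_{N+1}$ is cw$(2^N\cdot 2)$-hyperbolic, that is, cw$(2^{N+1})$-hyperbolic. Applying part~(4) with the hypotheses that $f_N$ is not cw$(2^N-1)$-expansive and $g_A$ is not cw$1$-expansive, the product $f_{N+1}$ is not cw$(2^N\cdot 2-1)$-expansive, i.e., not cw$(2^{N+1}-1)$-expansive. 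This closes the induction and produces the desired example in every dimension of the tower.

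No real obstacle is expected because Theorem \ref{thmproduct} packages exactly what is needed: hyperbolicity multiplies under products and the extremal failure of cw$(k-1)$-expansivity is preserved as soon as both factors witness it. The only thing to verify at each step is that the induction hypothesis indeed supplies both halves of each of parts~(3) and~(4), and this is immediate from the base case together with the previously constructed product. In particular, the explicit witness $f_{N+1}=g_A\times g_A\times\cdots\times g_A$ ($N+1$ factors) is a cw$(2^{N+1})$-hyperbolic homeomorphism of $(\mathbb{S}^2)^{N+1}$ that fails cw$(2^{N+1}-1)$-expansivity.
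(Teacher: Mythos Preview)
Your proof is correct and follows essentially the same approach as the paper: both take the $N$-fold product of the linear cw-Anosov diffeomorphism $g_A$ and apply items (3) and (4) of Theorem~\ref{thmproduct}. The only cosmetic difference is that you phrase the argument as an induction while the paper states the product directly, but the underlying construction and the invocation of Theorem~\ref{thmproduct} are identical.
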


\begin{proof}
Consider the product of $N$ copies of a linear cw-Anosov homeomorphism of $\mathbb{S}^2$. Since each copy is cw2-hyperbolic, the product is cw$(2^N)$-hyperbolic by item (3) of Theorem \ref{thmproduct} and since each copy is not cw1-expansive, the product is not cw$(2^N-1)$-expansive by item (4) of the same theorem.\end{proof}

The following diagram of implications enlighten the differences observed above and summarize the results obtained so far.

\begin{eqnarray*}  expansive + shadowing  \ \ \ \  \ \ \ \ \ &\Leftrightarrow& \ \ \ \ \ \ \  cw1-hyperbolic  \\
 \Downarrow \ \ \ \ \ \ \ \ \ \ \ \ \ \ \ \ \ \ \ \ \ \ \ \ \ \ \ \ \ \ \ &  &  \ \ \ \ \ \ \ \ \ \ \ \ \ \ \ \ \ \ \    \Downarrow   \\
 N-expansive + shadowing  \ \ \ \ \ \ \ \ \ & &   \ \ \ \ \ \ \ \ cwN-hyperbolic \\
\Downarrow \ \ \ \ \ \ \ \ \ \ \ \ \ \ \ \ \ \ \  \ \ \ \ \ \ \ \ \ \ \ \ & & \ \ \ \ \ \ \ \ \ \ \ \ \ \ \ \ \ \ \ \   \Downarrow \\
 N+1-expansive + shadowing \ \ \ \ \ \ \ & & \ \ \ \   cw(N+1)-hyperbolic \ \ \ \ \ \ \ \ \ \ \ \ \ \ \ \  \\  \Downarrow \ \ \ \ \ \ \ \ \ \ \ \ \ \ \ \ \ \ \ \ \ \ \ \ \ \ \ \ \ \ \ &  & \ \ \ \ \ \ \ \ \ \ \ \ \ \ \ \ \ \ \ \   \Downarrow   \\
 finite \ expansive + shadowing \ \ \ \ \ \ \ & & \ \ \ \  cw(finite)-hyperbolic \ \ \ \ \ \ \ \ \ \ \ \ \ \ \ \  \\  \Downarrow \ \ \ \ \ \ \ \ \ \ \ \ \ \ \ \ \ \ \ \ \ \ \ \ \ \ \ \ \ \ \ &  & \ \ \ \ \ \ \ \ \ \ \ \ \ \ \ \ \ \ \ \   \Downarrow   \\
countably \ expansive + shadowing \ \ \ \ \ \ \ & &    cw(countably)-hyperbolic \ \ \ \ \ \ \ \ \ \ \ \ \ \ \ \  \\
 \Downarrow \ \ \ \ \ \ \ \ \ \ \ \ \ \ \ \ \ \ \ \ \ \ \ \ \ \ \ \Downarrow  \ \ \ \ \ \ \ \ \ \ \ \ \ \  &  & \ \ \ \ \ \ \ \ \ \ \ \ \ \ \ \ \ \ \ \ \  \Downarrow   \\
entropy \ expansive \ \ \ \ \ \ \ \ \ \ \ cw-expansive \ \ &\Leftarrow& \ \ \ \ \ \ \ \ \ \     cw-hyperbolic \ \ \ \ \ \ \ \ \ \ \ \ \   \ \\
+ shadowing  \ \ \ \ \ \ \ \ \ \ \ + shadowing  \ \ \ \  & & \ \ \ \ \ \ \ \ \ \      \ \ \ \ \ \ \ \ \ \ \ \ \
\\
& &
\end{eqnarray*}

Let us briefly explain the implications on this diagram. On the left side we put homeomorphisms satisfying generalizations of expansiveness and the shadowing property and the chain of implications follow directly from the definitions: expansivity implies N-expansivity, that implies (N+1)-expansivity, which implies finite-expansivity, countable expansivity and cw-expansivity. For each $N\in\N$, examples of N-expansive homeomorphisms that are not (N-1)-expansive and satisfying the shadowing property can be found in \cite{CC}. In the same paper there are examples of finite expansive homeomorphisms with the shadowing property that are not N-expansive for any $N\in\N$. In \cite{ACCV2} there are examples of countably expansive homeomorphisms satisfying the shadowing property that are not finite expansive. All these examples are entropy and cw-expansive homeomorphisms. In \cite{ACCV2} we can also find examples of entropy expansive homeomorphisms with the shadowing property that are not countably expansive and in \cite{AAV} and \cite{ArtigueDend} it is proved that the linear cw-Anosov diffeomorphisms are cw-expansive, satisfy the shadowing property but are not countably nor entropy-expansive. On the right side we put the cw-hyperbolic homeomorphisms and the following chain of implications follow directly from the definitions: cw1-hyperbolicity implies cwN-hperbolicity, that implies cw(N+1)-hyperbolicity, which implies cw(finite)-hyperbolicity, cw(countable)-hyperbolicity and cw-hyperbolicity. The linear cw-Anosov diffeomorphisms are cw2-hyperbolic but not cw1-hyperbolic and for arbitrarily large $N\in\N$ examples of cwN-hyperbolic homeomorphisms that are not cw(N-1)-hyperbolic are given in Corollary \ref{corproduct}. The column on the right side is related to the column on the left side in the beginning, where expansiveness and shadowing are equivalent to cw1-hyperbolicity as proved in Theorem \ref{cwN-hyperbolic}, and in the end where cw-hyperbolicity implies both cw-expansiveness and the shadowing property as proved in Theorem \ref{teoCwHypSh}. The middle parts of both columns are different since cw-hyperbolic homeomorphisms are either expansive or they contain arbitrarily small topological semihorseshoes, as proved in Corollary \ref{classes}, and, hence, they are not countably nor entropy-expansive. This shows that the approach we follow in this paper is completely different from what we were doing in the previous ones (\cite{ACCV}, \cite{ACCV2}, \cite{APV}, \cite{CC} and \cite{CC2}) though it is still related to the idea of generalizing topological hyperbolicity and characterizing cw-expansive homeomorphisms satisfying the shadowing property.

\section{Cw-hyperbolicity and periodic points}
Now we discuss about the set of periodic points of cw-hyperbolic homeomorphisms. We recall that the topologically hyperbolic shadowing lemma is in the form of the unique shadowing property, while the cw-hyperbolic shadowing lemma is in the form of the L-shadowing property. One important difference between these two properties is regarding the existence of periodic points. In the unique shadowing property one can easily prove that the periodic points are dense in the non-wandering set observing that if $x$ and $f^k(x)$ are sufficiently close, then there is an unique point shadowing the segment of orbit from $x$ to $f^k(x)$ indefinitely and that this point must be periodic. In \cite{ACCV} there are examples of homeomorphisms satisfying the L-shadowing property but without periodic points, so the following question seems natural:

\begin{question}
Does cw-hyperbolicity imply the density of periodic points in the non-wandering set? Or simply, does it imply the existence of periodic points?
\end{question}

In the case where the space is a closed surface then it is proved in \cite{KP} that the shadowing property is sufficient for the existence of periodic points in each $\eps$-transitive component, even though nothing is said about the density of periodic points in the non-wandering set. As a consequence we obtain the following:

\begin{theorem}
Cw-hyperbolic homeomorphisms of surfaces contain at least one periodic point in each of its chain recurrent classes.
\end{theorem}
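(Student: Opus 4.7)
The plan is to combine Corollary \ref{classes} with the surface-specific existence result from \cite{KP} cited just before the statement. By Theorem \ref{teoCwHypSh} every cw-hyperbolic homeomorphism $f$ automatically satisfies the shadowing property, and by Corollary \ref{classes} its non-wandering set decomposes into finitely many chain recurrent classes $C_1,\dots,C_k$, each of which is closed, invariant, and transitive. So the hypotheses needed by \cite{KP}, namely a homeomorphism of a closed surface with the shadowing property, are in place.

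Given this, I would fix a chain recurrent class $C_i$, identify it with the $\eps$-transitive component containing it, and then invoke the main theorem of \cite{KP}, which asserts that on a closed surface every $\eps$-transitive component of a homeomorphism with the shadowing property contains at least one periodic point. Applying this to each $C_i$ in turn produces the required periodic orbit. Note that transitivity of $C_i$ together with shadowing already implies that $C_i$ coincides with a unique $\eps$-transitive component, so there is no ambiguity in the identification.

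The only real point that needs verification, and which I would flag as the chief (mild) obstacle, is the matching of vocabulary: one has to confirm that "chain recurrent class" in the sense of Corollary \ref{classes} (a maximal chain-transitive set obtained through chains of pseudo-orbits of arbitrarily small jump) coincides with "$\eps$-transitive component" in the sense of \cite{KP}, and that the periodic point supplied by \cite{KP} can be placed inside the prescribed class $C_i$ rather than in some other component. Since chain recurrent classes are closed, invariant and pairwise disjoint, this amounts to applying the theorem of \cite{KP} separately to each component, which is routine. Once this terminological check is done, the theorem follows without further work.
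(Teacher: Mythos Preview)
Your proposal is correct and follows essentially the same route as the paper: use Corollary~\ref{classes} to obtain finitely many chain recurrent classes, identify each with an $\eps$-transitive component, and then apply Theorem~1.1 of \cite{KP}. The paper's proof is simply a three-line version of what you wrote, with the finiteness of the classes used directly to justify the existence of a uniform $\eps>0$ making each chain recurrent class an $\eps$-transitive class.
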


\begin{proof}
Since cw-hyperbolic homeomorphisms have a finite number of chain recurrent classes (by Corollary \ref{classes}) then there exists $\eps>0$ such that each chain-recurrent class is also an $\eps$-transitive class. Then Theorem 1.1 in \cite{KP} assures that each one of the classes contain at least one periodic point.\end{proof}

We do not have information on the density of periodic points in the non-wandering set even in this case of a surface. The phenomena in Corollary \ref{classes} of the existence of arbitrarily small topological semihorseshoes could be very different from the hyperbolic scenario. These could be constructued to be approximating the chain-recurrent classes but could not contain periodic points. A different but related question is regarding the existence of an infinite number of periodic points with the same period. Another consequence of expansiveness is that for each $n\in\N$ there is only a finite number of periodic points with period $n$. Thus, the following question is natural:

\begin{question}
Does there exist a cw-hyperbolic homeomorphism with an infinite number of periodic points with the same period? And with an infinite number of fixed points? 
\end{question}

We answer this question negatively assuming cwN-hyperbolicity.

\begin{theorem}
If a homeomorphism is cwN-hyperbolic, then it has only finitely many periodic points with the same period. In particular, it has only finitely many fixed points.
\end{theorem}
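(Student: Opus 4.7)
The plan is to reduce the statement to the case of fixed points and then derive a contradiction from an infinite accumulation of fixed points, using the cw-local-product-structure, the cwN bound, and the hyperbolic cw-metric of Theorem \ref{teoCwHyp}.

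First I would verify that whenever $f$ is cwN-hyperbolic, so is $f^n$ for every $n\in\N$: the cw-local-product-structure for $f$ transfers to $f^n$ via the inclusion $C^s_\eps(x,f)\subset C^s_\eps(x,f^n)$ (and its unstable analog), while cw-expansivity and cwN-expansivity transfer by the standard uniform-continuity trick, choosing $\eps'$ so that $\diam(A)\le\eps'$ forces $\diam(f^j A)\le c$ for $|j|\le n-1$, which gives $C^s_{\eps'}(x,f^n)\subset C^s_c(x,f)$ and its unstable counterpart. Since the periodic points of $f$ of period $n$ are exactly the fixed points of $f^n$, it suffices to prove that every cwN-hyperbolic homeomorphism has only finitely many fixed points.

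For contradiction, assume there is an infinite set of fixed points, and pass to a subsequence $p_k\to p$, with $p$ fixed by continuity. Let $c$ be a cwN-expansivity constant, pick $\eps>0$ with $\eps+\delta\le c$, where $\delta\in(0,\eps)$ is the cw-LPS constant for $\eps$, and for all large $k$ select $z_k\in C^s_\eps(p)\cap C^u_\eps(p_k)$ via cw-LPS. Since $p_k$ is fixed, $C^u_\eps(p_k)$ is $f^{-1}$-invariant, so $f^{-n}(z_k)\to p_k$; hence distinct $p_k$'s yield distinct $z_k$'s. Direct estimates give $d(f^n(z_k),p)\le\eps$ for $n\ge 0$ and $d(f^{-n}(z_k),p)\le\eps+\delta$ for $n\ge 0$, so each $z_k$ lies in the dynamical ball $W^s_{\eps+\delta}(p)\cap W^u_{\eps+\delta}(p)$ and automatically in $C^s_\eps(p)\subset C^s_{\eps+\delta}(p)$.

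The crux of the argument is to upgrade this dynamical-ball membership to membership in the connected component $C^u_{\eps+\delta}(p)$: once achieved, the infinitely many distinct $z_k$'s all lie in $C^s_{\eps+\delta}(p)\cap C^u_{\eps+\delta}(p)$, contradicting the cwN bound of at most $N$ points. My plan is to consider the Hausdorff-limit continuum $U$ of a sub-subsequence of $\{C^u_\eps(p_k)\}_k$: by limit continuity $U\subset W^u_\eps(p)$ and $p\in U$ (since $p_k\in C^u_\eps(p_k)$ with $p_k\to p$), so $U$ is a non-trivial continuum contained in $C^u_\eps(p)$. The main obstacle is that individual $C^u_\eps(p_k)$'s may sit in different components of $W^u_{\eps+\delta}(p)$ than $U$; to close this gap I would combine the dual cw-LPS intersection $w_k\in C^u_\eps(p)\cap C^s_\eps(p_k)$ with the hyperbolic cw-metric $D$ of Theorem \ref{teoCwHyp}, whose compatibility with $\diam$ turns the smallness of $d(p_k,p)$ into smallness of the $D$-values of the relevant connecting continua; together with the forward contraction of $D$ on the stable piece $C^s_\eps(p_k)$ and the Hausdorff approximation of $C^u_\eps(p_k)$ by $U\subset C^u_\eps(p)$, this should produce for each large $k$ an explicit continuum in $W^u_{\eps+\delta}(p)$ joining $p$ to $z_k$. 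Once this bridge is constructed, the contradiction is immediate.
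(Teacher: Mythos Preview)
Your proposal has a genuine gap at precisely the step you flag as the ``crux'': placing the points $z_k$ inside $C^u_{\eps+\delta}(p)$. You correctly observe that $z_k\in W^u_{\eps+\delta}(p)$, but in a cw-hyperbolic (non-expansive) system the set $W^u_{\eps+\delta}(p)$ can have a Cantor set of arc components, and there is no a priori reason why $C^u_\eps(p_k)$ should lie in the component through $p$. Your proposed bridge does not close this gap: the continuum $C^u_\eps(p)$ connects $p$ to $w_k$, but from $w_k$ the only connection you have to $p_k$ (and hence to $z_k$) is through $C^s_\eps(p_k)$, which is a \emph{stable} continuum and is in general not contained in $W^u_{\eps+\delta}(p)$; forward contraction of $D$ on $C^s_\eps(p_k)$ controls forward iterates, not the backward iterates needed for membership in $W^u$. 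The Hausdorff limit $U\subset C^u_\eps(p)$ likewise says nothing about which component of $W^u_{\eps+\delta}(p)$ a fixed $C^u_\eps(p_k)$ belongs to. So the argument, as written, does not produce the required continuum.

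The paper's proof sidesteps this difficulty entirely by a much shorter route. It works directly with two periodic points $p\neq q$ of the same period $k$ that are close, takes a single $z\in C^u_\delta(p)\cap C^s_\delta(q)$, and iterates it: the $N+1$ points $z,f^{-k}(z),\dots,f^{-Nk}(z)$ are shown to lie in $C^u_\gamma(p)\cap C^s_\gamma(q)\subset C^u_{2\gamma}(z)\cap C^s_{2\gamma}(z)$. The key observation is that periodicity of $p$ anchors $f^{-ik}(C^u_\delta(p))$ at $p$ (so it stays inside $C^u_\gamma(p)$), and periodicity of $q$ anchors $f^{-ik}(C^s_\delta(q))$ at $q$; a uniform-continuity choice of $\delta$ (controlling only $Nk$ backward iterates of the stable piece) suffices. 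No accumulation, no Hausdorff limits, no cw-metric, and no reduction to fixed points are needed. If you want to repair your approach, the missing idea is exactly this: use the periodicity of \emph{both} nearby periodic points to keep the iterated stable and unstable continua connected to them, rather than trying to glue continua coming from different base points.
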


\begin{proof}
Let $\gamma>0$ be given by cwN-expansivity such that 
$$\#(C^s_{2\gamma}(x)\cap C^u_{2\gamma}(x))\leq N \,\,\,\,\,\, \text{for every} \,\,\,\,\,\, x\in X.$$
For each $k\in\N$, the uniform continuity of $f$ assures the existence of $\delta>0$ such that if $\diam(A)<2\delta$ then $$\diam(f^i(A))<\gamma \,\,\,\,\,\, \text{for every} \,\,\,\,\,\, -Nk\leq i\leq Nk.$$ Now choose $0<\eps<\delta$, given by the cw-local-product-structure, such that $$d(x,y)<\eps \,\,\,\,\,\, \text{implies} \,\,\,\,\,\, C^u_\delta(x)\cap C^s_\delta(y)\neq\emptyset.$$ We will prove that if $p$ is a periodic point of $f$ with period $k$, then there is no other periodic point of $f$ with period $k$ in $B(p,\eps)$. Indeed, consider $q\in B(p,\eps)$ a periodic point of $f$ with period $k$ and let $z\in C_\delta^u(p)\cap C_\delta^s(q)$. Note that the choice of $\delta$ and $f^{-k}(q)=q$ assure that
$$f^{-ik}(C^s_\delta(q))\subset C^s_\gamma(q) \,\,\,\,\,\, \text{for every} \,\,\,\,\,\, i\in\{0,\dots,N\}.$$
In particular, $$f^{-ik}(z)\in C_\gamma^u(p)\cap C_\gamma^s(q) \,\,\,\,\,\, \text{for every} \,\,\,\,\,\,  i\in\{0,\dots,N\}.$$
Hence, $$\#(C^s_{2\gamma}(z)\cap C^u_{2\gamma}(z))\geq N+1$$ contradicting cwN-expansivity.
\end{proof}

\begin{question}
Does there exists a cw-hyperbolic homeomorphism that is not cwN-hyperbolic for any $N\in\N$?
\end{question}

There is an example in \cite{AAV} of a cw-expansive homeomorphism of $\mathbb{T}^2$ with an infinite number of fixed points. We will prove that it is not cw-hyperbolic. The example is made by $C^0$-small perturbations, so called ``modifications'' in \cite{AAV}, made in the linear Anosov diffeomorphism $f_A$ given by the action in $\mathbb{T}^2$ of  $$A=\left(
                                                                 \begin{array}{cc}
                                                                   2 & 1 \\
                                                                   1 & 1 \\
                                                                 \end{array}
                                                               \right).$$ 
Consider the two eigenvalues of $A$: $$\lambda=\frac{3+\sqrt{5}}{2}>1 \,\,\,\,\,\, \text{and} \,\,\,\,\,\, \lambda^{-1}=\frac{3-\sqrt{5}}{2}<1$$ and its respective eigenvectors $$\left(1,\frac{\sqrt{5}-1}{2}\right) \,\,\,\,\,\, \text{and} \,\,\,\,\,\, \left(-1,\frac{\sqrt{5}+1}{2}\right)$$ which form an orthogonal basis of $\mathbb{R}^2$.
After a linear orthogonal change of coordinates $P$ we may assume that the linear map induced by $A$ is given by
$$\left(\begin{array}{l}
     2x+y \\
     x+y \end{array}\right)  =P^{-1}\circ T \circ P
                                              \left(\begin{array}{c}
                                              x \\
                                              y \\
                                              \end{array} \right)$$
where $T(x,y)= (\lambda x,\lambda^{-1} y)$.
Using Oxtoby-Ulam Theorem (Corollary 3 in \cite{OU}) the authors in \cite{AAV} defined the modifications $T_n$ in a domain $H_n$ contained in $$\{(x,y)\in\mathbb{R}^2;\,\, x\geq 0, \,\, y\geq 0\}$$ and limited by the branch of hyperboles 
$$xy=\frac{1}{2^{2n-1}}\,\,\,\,\,\, \text{and} \,\,\,\,\,\, xy=\frac{1}{2^{2n+1}}$$ and the straight lines $y=\lambda^3 x$ and $y=\lambda^{-3} x$ (see Figure 1 in \cite{AAV}). 
The modifications are done in such a way that there is a single fixed point $u_n=(\frac{1}{2^n},\frac{1}{2^n})$ in $xy=\frac{1}{2^{2n}}\cap H_n$ and such that $T_n=T$ in the boundary of $H_n$. 
After performing all these modifications $T_n$ with $n\in\N$, we end with a homeomorphism $f:\mathbb{T}^2\to\mathbb{T}^2$ satisfying the following condition: there exists $\eps>0$ such that
\begin{equation}
 \label{eqPertCwExp}
 \text{if }\,\,\ \diam f^n(C)\leq\eps \,\, \text{ for every } \,\, n\in\Z, \,\, \text{ then } \,\,\diam C\leq\eps/2
\end{equation}
for every continuum $C\subseteq \mathbb{T}^2$ (see Section 4 in \cite{AAV}).
This property let us define the equivalence relation $\sim$ in $\mathbb{T}^2$ as follows: $p\sim q$ if there exists $C\in\C(\mathbb{T}^2)$ such that $p,q\in C$ and $$\diam f^n(C)\leq\eps \,\,\,\,\,\, \text{for every} \,\,\,\,\,\, n\in\Z.$$
We consider the quotient space $\mathbb{T}^2/\sim$ and the map $g\colon\mathbb{T}^2/\sim \, \to \mathbb{T}^2/\sim$ induced by $f$: if $[x]\in\mathbb{T}^2/\sim$ denotes the equivalence class of $x\in \mathbb{T}^2$ under $\sim$, then $g([x])=[f(x)]$. It follows that $g$ is a cw-expansive homeomorphism and that $\mathbb{T}^2/\sim$ is homeomorphic to $\mathbb{T}^2$ (see Section 3 in \cite{AAV}) so we can assume that $g$ is a homeomorphism of $\mathbb{T}^2$. We will prove that $g$ does not satisfy the cw-local-product-structure.

\begin{proposition} \label{notcw}
The homeomorphism $g\colon\mathbb{T}^2\to\mathbb{T}^2$ is not cw-hyperbolic.
\end{proposition}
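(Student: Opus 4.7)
The strategy is to show directly that $g$ violates the cw-local-product-structure; since $g$ is already cw-expansive by the construction, this will establish that $g$ is not cw-hyperbolic. My plan hinges on the infinite family of fixed points $[u_n]=\pi(u_n)$ of $g$, where $\pi\colon\mathbb{T}^2\to\mathbb{T}^2/{\sim}$ is the quotient map: since $\sim$ is $f$-invariant, $\pi$ is equivariant and each $u_n$ descends to a fixed point of $g$. Because $u_n\to 0$ in $\mathbb{T}^2$ and $\pi$ is continuous, $[u_n]\to[0]$ in the quotient, and $[0]$ itself is a fixed point of $g$ since $0$ lies outside every modification region $H_n\subset\{x>0,\,y>0\}$.

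The first step is to identify $C^u_\eps([0])$ as a nontrivial arc transverse to the diagonal $\{y=x\}$ along which the $u_n$ approach the origin. Because the coordinate axes avoid every $H_n$ (each $H_n$ satisfies $xy>0$), the homeomorphism $f$ agrees with the linear map $T$ on a neighborhood of the $x$-axis near $0$. Hence for small $\eps$, a short segment of the $x$-axis through $0$ lies in the unstable set of $0$ for $f$, and its $\pi$-image is a nontrivial subcontinuum of the local unstable set of $[0]$. Thus $C^u_\eps([0])$ is this arc (up to the collapsing effect of $\sim$) and sits at distance at least $\lambda^{-3/2}/2^{n+1/2}$ from the $u_n$ for $n$ large.

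The second and more delicate step is to show that $C^s_\eps([u_n])$ is contained in $\pi(\overline{H_n})$. The claim is that any continuum $\Ga\subset\mathbb{T}^2$ with $u_n\in\Ga$ and $\diam f^k(\Ga)\le\eps$ for every $k\ge 0$ must remain within a bounded neighborhood of $H_n$: if a subarc of $\Ga$ crosses $\partial H_n$, the subsequent $f$-iterates there are governed by $T$, whose uniform hyperbolicity, combined with property \eqref{eqPertCwExp}, expands any direction transverse to the stable eigendirection beyond $\eps$ after finitely many iterates, contradicting the diameter bound. Since $\diam H_n\to 0$ as $n\to\infty$, this forces $\diam C^s_\eps([u_n])\to 0$ in the quotient.

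Combining the two steps yields the contradiction: if cw-local-product-structure held for the chosen $\eps$, there would be $\delta>0$ with $d([u_n],[0])<\delta$ forcing $C^s_\eps([u_n])\cap C^u_\eps([0])\ne\emptyset$. Choosing $n$ so large that $d([u_n],[0])<\delta$, $[u_n]\ne[0]$, and $\pi(\overline{H_n})$ is disjoint from $C^u_\eps([0])$ (which is a fixed arc along the $x$-axis, separated from $[u_n]$ since $u_n$ lies on the diagonal) gives $C^s_\eps([u_n])\cap C^u_\eps([0])=\emptyset$, a contradiction. The principal obstacle is the second step: rigorously characterizing $C^s_\eps([u_n])$ in the quotient requires combining the Oxtoby--Ulam measure-preserving modification inside $H_n$, the hyperbolic dynamics of $T$ outside, and the collapsing effect of $\sim$, in order to preclude ``stable tendrils'' of $[u_n]$ that escape $H_n$ and could reach $C^u_\eps([0])$.
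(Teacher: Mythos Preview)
Your overall strategy---violate the cw-local-product-structure by exhibiting arbitrarily close fixed points whose local stable/unstable continua miss each other---is exactly the paper's. The gap is in the choice of barrier.

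In Step 2 you try to trap $C^s_\eps(u_n)$ inside (a neighborhood of) $\overline{H_n}$, using $\partial H_n$ as the confining set. But $\partial H_n$ has two kinds of pieces: the hyperbola arcs $xy=1/2^{2n\pm 1}$ and the segments on the lines $y=\lambda^{\pm 3}x$. Only the hyperbolae are $f$-invariant (because $f=T$ there and $T$ preserves every level set of $xy$); the linear segments are not, and your sketch ``subsequent $f$-iterates there are governed by $T$, whose uniform hyperbolicity\dots'' does not control a crossing through those segments. The paper uses only the hyperbolae as barriers: any continuum through $u_n$ that meets $xy=1/2^{2n\pm 1}$ contains a point whose entire $f$-orbit stays on that hyperbola and drifts away from the fixed point $u_n$ under both $f$ and $f^{-1}$, so neither $C^s_\eps(u_n)$ nor $C^u_\eps(u_n)$ can cross. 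This confines both continua to the strip $\{1/2^{2n+1}<xy<1/2^{2n-1}\}$ (not to $H_n$), and since these strips are pairwise disjoint one gets $C^s_\eps(u_n)\cap C^u_\eps(u_m)=\emptyset$ for $n\neq m$ immediately. The passage to the quotient $g$ is then the short observation that $x\sim u_n$ forces $x$, and hence $C^s_\eps(x)$ and $C^u_\eps(x)$, into the same strip.

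Step 1 has a related weakness: you show the $x$-axis arc lies in $C^u_\eps([0])$ and then assert $C^u_\eps([0])$ \emph{is} that arc. The second claim does not follow from the first, and since the modifications accumulate at $0$ it is not obvious (note that $u_n\in W^u_\eps(0)$ for every large $n$). The hyperbola barrier would again settle this, but the paper sidesteps the issue altogether by pairing $[u_n]$ with $[u_m]$ for $n\neq m$ rather than with $[0]$; both local continua then sit in disjoint strips and no analysis at the accumulation point is required.
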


\begin{proof}
The main thing we need to observe is that for each $n\in\N$ both $C^u_\eps(u_n)$ and $C^s_\eps(u_n)$ are contained in the region between the hyperboles
$$xy=\frac{1}{2^{2n-1}}\,\,\,\,\,\, \text{and} \,\,\,\,\,\, xy=\frac{1}{2^{2n+1}}$$ 
since $f$ restricted to these hyperboles equals $T$ and, hence, points of these hyperboles will be distant from $u_n$ when iterated by $f$ and $f^{-1}$ (note that these hyperboles are invariant by $T$).
Thus, 
$$C^u_\eps(u_n)\cap C^s_\eps(u_m)= \emptyset \,\,\,\,\,\, \text{for every} \,\,\,\,\,\, n,m\in\N \,\,\,\,\,\, \text{with} \,\,\,\,\,\, n\neq m$$ and $f$ does not satisfy the cw-local-product-structure. To see that the same happens to $g$, we need to check that if $n\neq m$ then
$$C^u_\eps(x)\cap C^s_\eps(y)= \emptyset \,\,\,\,\,\, \text{for every} \,\,\,\,\,\, x\sim u_n \,\,\,\,\,\, \text{and} \,\,\,\,\,\, y\sim u_m$$
because this would imply that
$$C^u_\eps([u_n])\cap C^s_\eps([u_m])= \emptyset.$$
To prove this, we note that if $x\sim u_n$, then $x\in C^u_\eps(u_n)\cap C^s_\eps(u_n)$ and this implies that $x$ belongs to the region between the hyperboles, as observed above. Then the same argument assures that both $C^u_\eps(x)$ and $C^s_\eps(x)$ are contained in the region between the hiperboles. Since this happens for every $n\in\N$, this is enough to prove that
$$C^u_\eps([u_n])\cap C^s_\eps([u_m])= \emptyset\,\,\,\,\,\, \text{for every} \,\,\,\,\,\, n,m\in\N \,\,\,\,\,\, \text{with} \,\,\,\,\,\, n\neq m$$ and, hence, $g$ does not satisfy the cw-local-product structure.
\end{proof}

\section{Cw-hyperbolicity on $\mathbb{S}^2$}

We now start to discuss how to classify cw-hyperbolic homeomorphisms of $\mathbb{S}^2$. The results of \cite{ACCV} and \cite{ACCV2} and also Theorem \ref{teoChHypLsh} and Corollary \ref{classes} above, seem to indicate that either cw-hyperbolic homeomorphisms are expansive, or they are similar to linear cw-Anosov homeomorphisms of $\mathbb{S}^2$. It is proved in \cite{Hiraide} that topologically hyperbolic homeomorphisms of $\mathbb{T}^2$ are topologically conjugate to a linear Anosov diffeomorphism. It seems natural to ask if a similar result is true in the cw-hyperbolic case.

\begin{question}
Is every cw-hyperbolic homeomorphism of $\mathbb{S}^2$ topologically conjugate to a linear cw-Anosov diffeomorphism?
\end{question}

We answer this question affirmatively for homeomorphisms of $\mathbb{S}^2$ that are induced by topologically hyperbolic homeomorphisms of $\mathbb{T}^2$.

\begin{theorem}\label{sphere}
If $f\colon \mathbb{T}^2\to\mathbb{T}^2$ is a topologically hyperbolic homeomorphism that induces a homeomorphism $g\colon\mathbb{S}^2\to\mathbb{S}^2$, then $g$ is cw-hyperbolic and is topologically conjugate to a linear cw-Anosov diffeomorphism.
\end{theorem}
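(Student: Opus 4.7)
The plan is to exploit that, since $g$ is induced from $f$ on $\mathbb{S}^2 = \mathbb{T}^2/\iota$, where $\iota$ is the antipodal involution, the map $f$ must commute with $\iota$. I denote by $\pi\colon\mathbb{T}^2\to\mathbb{S}^2$ the branched double cover with four branch points (the fixed points of $\iota$). Away from these branch points $\pi$ is a local isometry for a suitable metric, so small connected sets in $\mathbb{S}^2$ lift to small connected sets in $\mathbb{T}^2$.

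First I would prove that $g$ is cw-hyperbolic by pushing the expansivity and shadowing of $f$ through $\pi$. For cw-expansivity, if $C\subset\mathbb{S}^2$ is a non-degenerate continuum with $\diam g^n(C)\leq\epsilon$ for all $n\in\mathbb{Z}$, I pick a connected component $\tilde C$ of $\pi^{-1}(C)$; then each $f^n(\tilde C)$ is a connected component of $\pi^{-1}(g^n(C))$ and has diameter uniformly bounded in terms of $\epsilon$. Expansivity of $f$ (which implies cw-expansivity) then forces $\tilde C$ and hence $C$ to be trivial. For the cw-local-product-structure, given $x,y\in\mathbb{S}^2$ close, I choose lifts $\tilde x,\tilde y\in\mathbb{T}^2$ that are as close, use the classical local product structure of $f$ to obtain $\tilde z\in W^u_\epsilon(\tilde x)\cap W^s_\epsilon(\tilde y)$, and observe that the connected components of $\tilde x$ and $\tilde y$ in their local unstable and stable sets project under $\pi$ into $C^u_{\epsilon'}(x)$ and $C^s_{\epsilon'}(y)$ respectively, so $\pi(\tilde z)$ lies in their intersection.

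Second, I would produce the conjugacy. By Hiraide's theorem \cite{Hiraide}, there is a homeomorphism $h\colon\mathbb{T}^2\to\mathbb{T}^2$ and a hyperbolic toral automorphism $f_A$ with $h\circ f = f_A\circ h$. Both $f$ and $f_A$ commute with $\iota$, so $h' := \iota\circ h\circ\iota$ is again a conjugacy between $f$ and $f_A$, and $c := h'\circ h^{-1}$ lies in the centralizer $Z(f_A)\subset\mathrm{Homeo}(\mathbb{T}^2)$ and satisfies $\iota c\iota = c^{-1}$. I would then invoke the standard description of $Z(f_A)$ for isotopy-class-of-identity elements, namely a finite abelian group of translations by $A$-periodic rational points, to extract a square root $c_0$ of $c$ in that centralizer with $\iota c_0\iota=c_0^{-1}$; replacing $h$ by $c_0^{-1}\circ h$ gives a conjugacy that now commutes with $\iota$. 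Hence $h$ descends to a homeomorphism $\bar h\colon\mathbb{S}^2\to\mathbb{S}^2$ conjugating $g$ to the linear cw-Anosov $g_A$, and cw-hyperbolicity, being a topological notion, is preserved by this conjugacy, giving a second proof of the first claim.

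The main obstacle is the \emph{symmetric} adjustment of the conjugacy in the third step: knowing that $c\in Z(f_A)$ is not enough; I need an $\iota$-equivariant choice of a square root. This demands a careful analysis of the centralizer of $f_A$ in the mapping class representative isotopic to the identity and verification that the antisymmetry $\iota c\iota=c^{-1}$ places $c$ inside the image of the squaring map, with the square root also satisfying the antisymmetry. If the matrix $A$ has no $2$-torsion obstruction in the relevant quotient, this is routine; if it does, I would have to absorb the residual ambiguity by composing with an involutive symmetry of $f_A$ (such as $-\mathrm{id}$, which coincides with $\iota$) already present in the centralizer, so that in every case the desired equivariant conjugacy exists.
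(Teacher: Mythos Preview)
Your direct argument for cw-hyperbolicity of $g$ via lifting through the branched cover $\pi$ is sound (with routine care at the four branch points) and is actually not how the paper proceeds: there cw-hyperbolicity is obtained \emph{a posteriori} from the conjugacy to $g_A$ together with the conjugacy-invariance of cw-hyperbolicity. So on that point you give an independent and slightly more hands-on route.

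The gap is in your symmetrization of the conjugacy. Once you know $c=h'h^{-1}$ is isotopic to the identity and commutes with $f_A$, rigidity does force $c$ to be a translation $T_v$ with $(A-I)v\in\mathbb{Z}^2$; and indeed every such translation satisfies $\iota T_v\iota=T_v^{-1}$ automatically. But the finite group $\mathbb{Z}^2/(A-I)\mathbb{Z}^2$ can have $2$-torsion (e.g.\ when $|\det(A-I)|$ is even), and then an element of order two has no square root there. Your proposed remedy, composing with $\iota$, does not help: replacing $h$ by $\iota h$ replaces $c$ by $c^{-1}$, which is $c$ again when $c^2=\mathrm{id}$, and moreover $\iota h$ is no longer isotopic to the identity. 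So as written the symmetrization step is incomplete.

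The paper avoids this entirely by going back inside Franks' construction rather than treating the Hiraide conjugacy as a black box. It restricts the operator $F(h)=A^{-1}\circ h\circ f$ to the subspace $Q'=\{h\in Q:\ h(-x)=-h(x)\}$ of odd periodic maps, checks that $F$ preserves $Q'$, that $(F-I)(-\mathrm{id})\in Q'$, and that $F-I$ is an isomorphism of $Q'$ (by the same hyperbolic splitting argument as in Franks). This yields directly a semiconjugacy $h+\mathrm{id}$ with $h$ odd, hence $\iota$-equivariant, which then descends to $\mathbb{S}^2$; injectivity is inherited from Hiraide. Equivalently, you could salvage your approach in one line by invoking the \emph{uniqueness} of the Franks semiconjugacy in $Q+\mathrm{id}$: since $\iota H\iota$ is another semiconjugacy in the same class, $H=\iota H\iota$ and $c=\mathrm{id}$, so no square root is ever needed.
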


\begin{proof}
In this case, there is a conjugacy between $f$ and a linear Anosov diffeomorphism $f_A$ given by \cite{Hiraide}, we only need to prove that the conjugacy homeomorphism induces a homeomorphism of $\mathbb{S}^2$ that conjugates $g$ and $g_A$. To prove this we need to go back to techniques of Franks \cite{F1} that create a semiconjugacy between any homeomorphism of $\mathbb{T}^2$ to a linear Anosov diffeomorphism provided it is homotopic to it. We will adapt some steps of his proof to ensure that this semiconjugacy map induces not only a map of $\mathbb{T}^2$ but also a map of $\mathbb{S}^2$. It is proved in \cite{Hiraide} that the linear part $A$ of $f$ is hyperbolic, so that $f$ is homotopic to a linear Anosov diffeomorphism $f_A$.
We will also denote by $f$ any lift to $\R^2$ of the map $f$ of $\mathbb{T}^2$ and we hope this causes no confusion. Note that $f$ induces a homeomorphism of $\mathbb{S}^2$ if, and only if, $f(-x)=-f(x)$ for every $x\in\R^2$.
Let $C^0(\R^2)$ denote the space of all continuous maps of $\R^2$ and define $F:C^0(\R^2)\to C^0(\R^2)$ by
$$F(h)=A^{-1} \circ h \circ f$$ where $A$ denotes the linear part of $f$. Note that $F(h)=h$ if, and only if, $A\circ h=h\circ f$, so that fixed points of $F$ are in a bijective relation with semiconjugacies between $A$ and $f$. In \cite{F1} Franks obtains a fixed point of $F$ of the form $h+id$ where $h\in Q$ and
$$Q=\{h\in C^0(\R^2); \ h(x+m)=h(x) \text{ for every } x \in \R^2 \text{ and } m \in \Z^2\}.$$
assuring that $h+id$ induces a map of $\mathbb{T}^2$. To ensure it induces a map on $\mathbb{S}^2$ we consider the set
$$Q'=\{h\in Q; \ h(-x)=-h(x) \,\,\, \text{for every} \,\,\, x\in\R^2 \}$$
and we prove that $F$ has a fixed point of the form $h+id$ with $h\in Q'$. Note that $Q$ is a vector space and that $Q'$ is a subspace of $Q$ that is invariant by $F$. Indeed, if $h_1$, $h_2 \in Q'$ and $t \in \R$, then
\begin{align*}
(h_1+th_2)(-x) & = h_1(-x)+th_2(-x)\\ & =-h_1(x)-th_2(x) \\ & = -(h_1+th_2)(x)
\end{align*}
for every $x\in\R^2$, so that $h_1+th_2\in Q'$.
Also if $h\in Q'$, then
\begin{align*}
F(h)(-x) & = A^{-1} \circ h \circ f(-x) \\ & = A^{-1} \circ h(-f(x)) \\ & = -A^{-1}(h(f(x)))\\ & = -F(h)(x)
\end{align*}
for every $x\in\R^2$, so that $F(h)\in Q'$. Noted this, we can consider the restriction $F: Q' \to Q'$. Now we note that $F$ is a linear map in $Q'$, since
\begin{align*}
F(h_1+th_2) & = A^{-1} \circ (h_1+th_2) \circ f\\ & =A^{-1} \circ h_1 \circ f + t A^{-1} \circ h_2 \circ f \\ & = F(h_1) + tF(h_2).
\end{align*}
As is \cite{F1}, we denote by $I$ the identity map on $C^0(\R^2)$ and prove that $$(F-I)(-id) \in Q'.$$ Indeed, we have
\begin{align*}
(F-I)(-id)(-x) & = F(-id)(-x) - I(-id)(-x) \\ & = A^{-1} \circ (-id) \circ f(-x) - (-id)(-x)\\ & = A^{-1}(f(x))-x \\ &  = F(id)(x) - I(id)(x)\\ & = -(F-I)(-id)(x).
\end{align*}
It is proved in \cite{F1} that $(F-I)$ is an isomorphism of $Q$ and we will use a simiar argument to assure that it is an isomorphism of $Q'$. Indeed, since $A$ is hyperbolic, we can write $\mathbb{R}^2=E^s\oplus E^u$ and choose $0<\lambda<1$ satisfying $$|A^n(v)|\leq\lambda^n|v|, \,\,\,\,\,\, \text{for every} \,\,\,\,\,\, v\in E^s \,\,\,\,\,\, \text{and}$$
$$|A^{-n}(v)|\leq\lambda^n|v|, \,\,\,\,\,\, \text{for every} \,\,\,\,\,\, v\in E^u$$ and also write $Q'=Q'^s\oplus Q'^u$ where $$Q'^s=\{h\in Q'; h(\mathbb{R}^2)\subset E^s\} \,\,\,\,\,\, \text{and}$$ $$Q'^u=\{h\in Q'; h(\mathbb{R}^2)\subset E^u\}.$$ Note that $F(Q'^s)=Q'^s$ because $h\in Q'^s$ implies $h\circ f(\mathbb{R}^2)\subset E^s$ and hence
$$F(h)(\mathbb{R}^2)=A^{-1}\circ h\circ f(\mathbb{R}^2)\subset E^s.$$ A similar argument assures that $F(Q'^u)=Q'^u$. Considering the $C^0$ norm $\|.\|$ in $Q'$ (here we recall that the definition of $Q$ assures that the $C^0$ norm is well defined) it is easily checked that
$$\|F^n(h)\|\leq \lambda^n\|h\|, \,\,\,\,\,\, \text{for every} \,\,\,\,\,\, h\in Q'^u \,\,\,\,\,\, \text{and}$$
$$\|F^{-n}(h)\|\leq \lambda^n\|h\|, \,\,\,\,\,\, \text{for every} \,\,\,\,\,\, h\in Q'^s,$$ where $F^{-1}(h)=A\circ h\circ f^{-1}$. Thus, we can prove that $$[(F-I)_{|Q'^u}]^{-1}=-\sum_{n=0}^{+\infty}(F_{|Q'^u})^n \,\,\,\,\,\, \text{and}$$
$$[(F-I)_{|Q'^s}]^{-1}=\sum_{n=1}^{+\infty}(F_{|Q'^s})^{-n}.$$ Note that the previous inequalities assure that the right side of these equalities converge. We have $$(F-I)_{|Q'^u}\circ\left(-\sum_{n=0}^{+\infty}(F_{|Q'^u})^n\right)=\sum_{n=0}^{+\infty}(F_{|Q'^u})^n-\sum_{n=1}^{+\infty}(F_{|Q'^u})^n=I_{|Q'^u}$$ and also that
$$(F-I)_{|Q'^s}\circ \sum_{n=1}^{+\infty}(F_{|Q'^s})^{-n}=\sum_{n=0}^{+\infty}(F_{|Q'^s})^{-n}-\sum_{n=1}^{+\infty}(F_{|Q'^s})^{-n}=I_{|Q'^s}.$$
Since $Q'=Q'^s\oplus Q'^u$ it follows that $F-I$ is an isomorphism of $Q'$. Thus, as $(F-I)(-id) \in Q'$, there exists $h \in Q'$ such that $$(F-I)(h) = (F-I)(-id).$$ Thus, $h+id$ is a fixed point of $F$ because $$(F-I)(h+id) = (F-I)(h)-(F-I)(-id) = 0$$ which, in turn, implies $$F(h+id)=h+id.$$ This proves that $h+id$ induces a semiconjugacy on $\mathbb{S}^2$ between $g$ and $g_A$. The injectivity of this map follows from the proof of the topologically hyperbolic case proved in \cite{Hiraide}.
\end{proof}

We exhibit a family of non-linear cw-hyperbolic homeomorphisms of $\mathbb{S}^2$ obtained as quocients of topologically hyperbolic homeomorphisms of $\mathbb{T}^2$, illustrating the previous theorem. For each $t\in[0,1]$ define $f_t\colon\R^2\to\R^2$ as
$$f_t(x,y)=\left(2x+y-\frac{t}{2\pi}\sin(2\pi x), x+y-\frac{t}{2\pi}\sin(2\pi x)\right).$$
Note that $f_0$ induces in $\mathbb{T}^2$ the usual linear Anosov diffeomorphism and in $\mathbb{S}^2$ the usual linear cw-Anosov diffeomorphism. Also, $f_t$ induces homeomorphisms in $\mathbb{T}^2$ and $\mathbb{S}^2$ for every $t\in[0,1]$ since 
$$f_t(x+m_1,y+m_2)=f_t(x,y)+(2m_1+m_2,m_1+m_2) $$  $$\text{and} \,\,\,\,\,\,  f_t(-x,-y)=-f_t(x,y)$$
for every $(x,y)\in\R^2$ and every $(m_1,m_2)\in\Z^2$.
The familiy in $\mathbb{T}^2$ induced by $(f_t)_{t\in[0,1]}$ is formed by topologically hyperbolic homeomorphisms as proved in \cite{Lewowicz} and Theorem \ref{sphere} assures that the family induced in $\mathbb{S}^2$ is formed by cw-hyperbolic homeomorphisms topologically conjugate to the linear cw-Anosov diffeomorphism induced by $f_0$. One interesting fact is that $(0,0)$ is a fixed point for the whole family $(f_t)_{t\in[0,1]}$ that is not hyperbolic for $f_1$
since
$$Df_1(0,0)=\begin{pmatrix}
		1 & 1 \\
		0 & 1
		\end{pmatrix}.$$
and there is not a $Df_1(0,0)$-invariant splitting of $\R^2$, but even so $f_1$ induces a cw-hyperbolic homeomorphism of $\mathbb{S}^2$ that is not topologically hyperbolic. A consequence of the next proposition is that these examples given by Theorem \ref{sphere} are cw2-hyperbolic and not cw1-hyperbolic since the linear cw-Anosov diffeomorphisms are cw2-hyperbolic but not cw1-hyperbolic.

\begin{proposition}
Let $f$ be a homeomorphism of a compact metric space $X$ and $g$ be a homeomorphism of a compact metric space $Y$. If $f$ and $g$ are topologically conjugate, then 
\begin{enumerate}
\item $f$ is cw-hyperbolic if, and only if, $g$ is cw-hyperbolic,
\item $f$ is cwN-expansive if, and only if, $g$ is cwN-expansive.
\end{enumerate}
\end{proposition}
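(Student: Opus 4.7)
Let $h\colon X\to Y$ denote a topological conjugacy with $h\circ f=g\circ h$, and observe that compactness of $X$ and $Y$ forces both $h$ and $h^{-1}$ to be uniformly continuous. My plan is to reduce both items to a single \emph{transport lemma}: for every $\eps>0$ there exists $\eps'>0$ such that
$$h(C^s_{\eps'}(x))\subseteq C^s_\eps(h(x))\quad\text{and}\quad h(C^u_{\eps'}(x))\subseteq C^u_\eps(h(x))$$
for every $x\in X$, together with the symmetric statement obtained by interchanging the roles of $h$ and $h^{-1}$. The lemma itself is routine: $h$ sends continua to continua, and using $g^n\circ h=h\circ f^n$ together with uniform continuity of $h$ one checks that if $\diam(f^n(C^s_{\eps'}(x)))\leq\eps'$ for all $n\geq 0$, then $\diam(g^n(h(C^s_{\eps'}(x))))\leq\eps$ for all $n\geq 0$, so $h(C^s_{\eps'}(x))\subseteq W^s_\eps(h(x))$; since this image is connected and contains $h(x)$, it lies in the connected component $C^s_\eps(h(x))$. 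The unstable version is identical, applied to $f^{-1}$ and $g^{-1}$.

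Given the transport lemma, item (2) is immediate. Let $c$ be a cwN-expansivity constant for $f$, and use the transport lemma for $h^{-1}$ to produce $c^*>0$ with
$$h^{-1}(C^s_{c^*}(y)\cap C^u_{c^*}(y))\subseteq C^s_c(h^{-1}(y))\cap C^u_c(h^{-1}(y))$$
for every $y\in Y$; since the right-hand side has at most $N$ points and $h^{-1}$ is injective, so does the left-hand side, and thus $c^*$ is a cwN-expansivity constant for $g$. The converse is symmetric. For item (1), cw-expansivity is the same argument ignoring cardinality, while the cw-local-product-structure is transported as follows: given $\eps>0$, apply the transport lemma to get $\eps'>0$, apply the cw-local-product-structure of $f$ at scale $\eps'$ to obtain $\delta'>0$, and finally use uniform continuity of $h^{-1}$ to choose $\delta>0$ with $d_Y(y_1,y_2)<\delta$ forcing $d_X(h^{-1}(y_1),h^{-1}(y_2))<\delta'$; then the $h$-image of any point in $C^s_{\eps'}(h^{-1}(y_1))\cap C^u_{\eps'}(h^{-1}(y_2))$ lies in $C^s_\eps(y_1)\cap C^u_\eps(y_2)$. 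Note also that being a Peano continuum is a topological property, so the implicit standing hypothesis from the definition of cw-hyperbolicity transfers between $X$ and $Y$ without further comment.

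There is no serious obstacle here: the whole argument amounts to checking that the dynamical invariants involved (subcontinuum, forward and backward diameters, cardinality of intersections, connected components) are preserved by topological conjugacy once paired with uniform continuity. The only point needing mild care is verifying that the image of the connected component $C^s_{\eps'}(x)$ lands inside the correct component $C^s_\eps(h(x))$, and this is automatic because $h$ is a homeomorphism, hence it preserves connectedness and sends the basepoint to the basepoint.
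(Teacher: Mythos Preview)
Your proposal is correct and follows essentially the same approach as the paper: both arguments use uniform continuity of $h$ and $h^{-1}$ together with the conjugacy relation to show that $h$ carries $C^s_{\eps'}(x)$ into $C^s_\eps(h(x))$ (and similarly for unstable continua), from which cw-expansivity, cw-local-product-structure, and cwN-expansivity transfer immediately. The only cosmetic difference is that you isolate this as a named ``transport lemma'' whereas the paper performs the same estimates inline; your additional remark that the Peano continuum hypothesis is topological is a nice touch that the paper leaves implicit.
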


\begin{proof}
Let $d_X$ and $d_Y$ denote the metrics in $X$ and $Y$, respectively, and $h\colon X\to Y$ be the topological conjugacy between $f$ and $g$. Suppose that $f$ is cw-hyperbolic with $c>0$ a cw-expansivity constant. To prove that $g$ is cw-hyperbolic we need to prove that $g$ is cw-expansive and it satisfies the cw-local-product-structure. Choose $\alpha\in(0,c)$, given by uniform continuity of $h^{-1}$, such that
$$d_Y(x,y)<\alpha \,\,\,\,\,\, \text{implies} \,\,\,\,\,\, d_X(h^{-1}(x),h^{-1}(y))<c.$$
If $\diam_Y(g^k(C))\leq\alpha$ for every $k\in\Z$, then $\diam_X(f^k(h^{-1}(C)))\leq c$ for every $k\in\Z$, and, hence, $h^{-1}(C)$ is totally disconnected since $f$ is cw-expansive. It follows that $C$ is totally disconnected, since $h$ is a homeomorphism, and that $g$ is cw-expansive and $\alpha$ is a cw-expansivity constant of $g$. Let $\eps>0$ be given and choose $\eps'\in(0,\eps)$, given by uniform continuity of $h$, such that
$$d_X(x,y)<\eps' \,\,\,\,\,\, \text{implies} \,\,\,\,\,\, d_X(h(x),h(y))<\eps.$$
Let $\delta'\in(0,\eps')$ be given by the cw-local-product-structure of $f$, such that
$$d_X(x,y)<\delta' \,\,\,\,\,\, \text{implies} \,\,\,\,\,\, C^u_{\eps'}(x)\cap C^s_{\eps'}(y)\neq\emptyset.$$ Let $\delta>0$, given by uniform continuity of $h^{-1}$, be such that
$$d_Y(x,y)<\delta \,\,\,\,\,\, \text{implies} \,\,\,\,\,\, d_X(h^{-1}(x),h^{-1}(y))<\delta'.$$
Thus, if $d_Y(x,y)<\delta$, then $d_X(h^{-1}(x),h^{-1}(y))<\delta'$ and, hence, there exists $$z\in C^u_{\eps'}(h^{-1}(x))\cap C^s_{\eps'}(h^{-1}(y)).$$ The choice of $\eps'$ assures that $h(z)\in C^u_{\eps}(x)\cap C^s_{\eps}(y)$ and this proves (1). To prove (2) assume that $f$ is cwN-expansive and $r>0$ satisfies $$\#(C^s_r(x)\cap C^u_r(x))\leq N \,\,\,\,\,\, \text{for every} \,\,\,\,\,\, x\in X.$$ Choose $\beta\in(0,r)$, given by uniform continuity of $h^{-1}$, such that $$d_Y(x,y)<\beta \,\,\,\,\,\, \text{implies} \,\,\,\,\,\, d_X(h^{-1}(x),h^{-1}(y))<r.$$ Then
$$h^{-1}(C^s_{\beta}(y)\cap C^u_{\beta}(y))\subset C^s_r(h^{-1}(y))\cap C^u_r(h^{-1}(y)) \,\,\,\,\,\, \text{for every} \,\,\,\,\,\, y\in Y$$
and since $$\#(C^s_r(h^{-1}(y))\cap C^u_r(h^{-1}(y)))\leq N$$ and $h$ is a homeomorphism, it follows that 
$$\#(C^s_{\beta}(y)\cap C^u_{\beta}(y))\leq N \,\,\,\,\,\, \text{for every} \,\,\,\,\,\, y\in Y.$$ This proves that $g$ is cwN-expansive and (2) is proved.
\end{proof}

\begin{remark}
The following is equivalent to the item (2) above: $f$ is not cwN-expansive if, and only if, $g$ is not cwN-expansive. Thus, $f$ is cwN-hyperbolic but not cw(N-1)-expansive if, and only if, $g$ is cwN-hyperbolic but not cw(N-1)-expansive.
\end{remark}

\begin{question}
Does there exist a cwN-hyperbolic homeomorphism of a closed surface that is not cw2-expansive?
\end{question}

\section*{Acknowledgements}
The second author was supported by CAPES and the Alexander von Humboldt Foundation under the project number 88881.162174/2017-1 and also by CNPq grant number 405916/2018-3.

\end{document}